\pgfplotsset{width=7cm,compat=1.8}
\numberwithin{equation}{section}
\pgfplotsset{ticks=none, compat=newest}
\pgfplotsset{soldot/.style={color=black,only marks,mark=*}}
\pgfplotsset{holdot/.style={color=black,fill=white,only marks,mark=*}}
\newcommand{\eq}{\begin{equation}}
\newcommand{\en}{\end{equation}}
\DeclareSymbolFont{bbold}{U}{bbold}{m}{n}
\DeclareSymbolFontAlphabet{\mathbbold}{bbold}
\newcommand{\ind}{\mathbbold{1}}
\newcommand{\Rmnum}[1]{\expandafter\@slowromancap\romannumeral #1@}
\newtheorem{thm}{Theorem}[section]
\newtheorem{prop}[thm]{Proposition}
\newtheorem{lem}[thm]{Lemma}
\theoremstyle{definition}
\newtheorem{remark}[thm]{Remark}
\newtheorem{defn}[thm]{Definition}
\newtheorem{conjecture}[thm]{Conjecture}
\newtheorem{notation}[thm]{Notation}
\numberwithin{equation}{section}
\newcommand{\ed}{\,{\buildrel d \over =}\,}
\renewcommand{\and}{ \quad \text{and} \quad }
\newcommand{\bl}{\begin{lemma}}
\newcommand{\el}{\end{lemma}}
\newcommand{\bp}{\begin{proposition}}
\newcommand{\ep}{\end{proposition}}
\newcommand{\bcor}{\begin{corollary}}
\newcommand{\ecor}{\end{corollary}}
\newcommand{\bth}{\begin{theorem}}
\newcommand{\et}{\end{theorem}}
\newcommand{\be}{\begin{equation}}
\newcommand{\ee}{\end{equation}}
\newcommand{\bal}{\begin{align}}
\newcommand{\eal}{\end{align}}
\newcommand{\bi}{\begin{itemize}}
\newcommand{\ei}{\end{itemize}}
\newcommand{\G}{\Gamma}
\renewcommand{\i}{\infty}
\newcommand{\bE}{{\mathbb E}}
\newcommand{\bN}{{\mathbb N}}
\newcommand{\bP}{{\mathbb P}}
\newcommand{\bR}{{\mathbb R}}
\newcommand{\bZ}{{\mathbb Z}}
\newcommand{\cG}{{\mathcal G}}
\newcommand{\cU}{{\mathcal U}}
\newcommand{{\cA}}{{\mathcal A}}
\newcommand{\cN}{{\mathcal N}}
\newcommand{\cF}{{\mathcal F}}
\newcommand{\cH}{{\mathcal H}}
\begin{document}
\author{Mehdi Ouaki}
\address{Department of Statistics \#3860 \\ 451 Evans Hall \\ University of California at Berkeley \\ Berkeley, CA 94720-3860 \\ USA}
\email{mouaki@berkeley.edu}
\author{Jim Pitman}
\address{Department of Statistics \#3860 \\ 367 Evans Hall \\ University of California at Berkeley \\ Berkeley, CA 94720-3860 \\ USA}
\email{pitman@stat.berkeley.edu}

\subjclass[2010]{60G51, 60G55, 60J65}

\keywords{convex minorant, path decomposition, Brownian motion}
\date{\today}

\title{Markovian structure in the concave majorant of Brownian motion}
\maketitle

\begin{abstract}{ The purpose of this paper is to highlight some hidden Markovian structure of the concave majorant of the Brownian motion. Several distributional identities are implied by the joint law of a standard one-dimensional Brownian motion $B$ and its almost surely unique concave majorant $K$ on $[0,\infty)$. 
In particular, the one-dimensional distribution of $2 K_t - B_t$ is that of $R_5(t)$, where $R_5$ is a $5-$dimensional Bessel process with $R_5(0) = 0$. The process $2K-B$ shares a number of other properties with $R_5$, and we conjecture that it may have the distribution of $R_5$. We also describe the distribution of the 
convex minorant of a three-dimensional Bessel process with drift.
}
\end{abstract}

\section{Introduction}
There is a rich literature on minorants of stochastic processes such as Brownian motion and L\'evy processes. 
initiated by Groeneboom in \cite{gro83}, and continued in \cite{pit83,cin92,ber00,cd01,suidan,abrampitm,bravpitm,pitmanross,MR2798000}  
with applications to problems in statistics such as isotonic regression \cite{isoto}. 
An analogous study of Lipschitz minorants of L\'evy processes was taken up in \cite{abramevans,evansouaki}. 
The purpose of this paper is to develop a deeper understanding of Markovian properties of the concave majorant of Brownian motion. \\

For any real-valued function $f$ defined on a domain $U$, we say that $c$ is its concave majorant if $c$ is the minimal concave function such that $c(u) \ge f(u)$ for all $u \in U$. It was shown by Groeneboom in \cite{gro83} that
a standard one-dimensional Brownian motion $B$ admits almost surely a unique concave majorant $K$ on the domain $[0,\infty)$, with the following properties. The process $(K(t))_{t \ge 0}$ is an increasing piecewise-linear function with infinitely many linear segments which accumulate only at zero and at infinity, and the process of slopes and lengths of these segments is a Poisson point process.
Moreover, conditionally on the concave majorant $K$, the process $K-B$ is a concatenation of independent Brownian excursions between the vertices of $K$, where $t$ is a vertex of $K$ if $K(t)=B(t)$, or equivalently if 
$K'(t+)<K'(t-)$ 
where $K'$ is the right-hand derivative of $K$. \\

In the tradition of convex analysis, working with either positions or slopes gives two dual perspectives of a convex function. 
In our setting, we can either consider the concave majorant at a fixed time $t>0$, or we can fix a slope $\mu >0$ and consider the concave majorant at the random time $\sigma_{\mu}:=\text{argmax}^{+}_{t \ge 0} \{ B(t)-\mu t \}$. 
These two points of view offer complementary information about the concave majorant, as discussed further in the literature cited above.

Before stating our main results, let us introduce some notation 
\begin{notation}
For a random variable $X$ taking values in $\bR^{d}$, with a probability density with respect to Lebesgue measure on $\bR^d$, we denote that probability density function by $f_{X}$. 
Similarly, for two random variables $X$ and $Y$ both taking values in Euclidean spaces (not necessarily of the same dimension), the notation
$f_{X|Y=y}(\cdot)$ is used to denote a regular conditional probability density of $X$ given $Y=y$.
\\

For $a,b>0$, 
let $\beta_{a,b}$ denote a random variable with the $\text{beta}(a,b)$ density 
\[
f_{\beta_{a,b}}(u)=\frac{\Gamma(a+b)}{\Gamma(a)\Gamma(b)} u^{a-1} (1-u)^{b-1} \ind_{\{0<u<1\}}
\]
where $\Gamma$ is the Gamma function. For $k \in \bN$, we denote by $\chi_k^2$ a random variable with the chi-squared$(k)$ or gamma$(k/2,1/2)$ distribution of  the sum of squares of $k$ independent
standard Gaussian variables, and write $\chi_k$ for the positive square root of $\chi_k^2$, with chi$(k)$ distribution.
Finally, $\phi$  is the probability density  and $\bar{\Phi}$ is the right-tail probability) of a standard normal random variable:
\[
\phi(x) =\frac{1}{\sqrt{2 \pi}} \exp \left(-\frac{x^2}{2} \right) , \qquad  \bar{\Phi}(x)= \int_{x}^{\i} \phi(t)dt ~~~,x \in \bR.
\]
For $\mu \in \bR$, we use the notation $(B_{\mu}(t):=B(t)+\mu t)_{t \ge 0}$ for a Brownian motion with drift parameter $\mu$ and unit variance parameter. 
For each level $y>0$ and drift $\mu>0$, define the first and last passage times
\[
T_{\mu,y} : = \inf\{ t >0 : B_{\mu}(t)=y \} \text{ and } G_{\mu,y}:=\sup \{ t > 0 : B_{\mu}(t)=y \}.
\]
So $T_{\mu,y}$ has the inverse Gaussian distribution with parameters $(\mu,y)$, whereas $G_{\mu,y}$ has the size-biased inverse Gaussian distribution with parameters $(\mu,y)$. 
Their respective densities are denoted by $f_{\mu,y}$ and $f^{*}_{\mu,y}$ and given by
\[
f_{\mu,y}(t):=f_{T_{\mu,y}}(t)= \frac{y}{\sqrt{2 \pi t^3}} \exp \left(-\frac{(y-\mu t)^2}{2t} \right )\ind _{\{t>0\}}
\]
and 
\[
f^*_{\mu,y}(t):=f_{G_{\mu,y}}(t)= \frac{ \mu}{y} \, t \, f_{\mu,y}(t) .
\]
For each $\mu \ge 0$, the three-dimensional Bessel process with drift $\mu$ is the unique diffusion with the same law as the radial part of a three-dimensional Brownian motion with drift of magnitude $\mu$. We denote this diffusion by $\text{BES}^{r}(3,\mu)$ 
if it is started at position $r$ at time $0$. A {\em three-dimensional Bessel bridge} $W$ on an interval $[0,a]$ such that $W(0)=x$ and $W(a)=y$ has the law of a three-dimensional Bessel process with drift $0$ started at $x$ at time $0$ and conditioned to take value $y$ at time $a$ (the conditioning is in the sense of regular conditional probability). 
We refer to \cite{revuzyor}[Chapter 3,6,11] for further information about these processes, which appear in some of our statements and proofs.
\end{notation}
\subsection{Fixed time analysis}
For $t>0$, let $(G_t,D_t)$ be the left-hand and right-hand vertices of the segment of $K$ straddling $t$. For fixed $t>0$, we almost surely have $0<G_t < t < D_t$. Define $I(t)=K(t)-tK'(t)$ to be the intercept at $0$ of the line extending this segment. As the process $(K,B)$ enjoys 
Brownian scaling, we can restrict discussion to the time $t=1$,
as in the following proposition, which gives the joint density of different quantities related to the Brownian motion and its concave majorant at time $t=1$.

\begin{prop}\label{fixed-time}
The density function of $(K'(1),I(1),K(1)-B(1),\frac{1}{G_1},D_1)$ is 
\begin{align*}
f_5(a,b,y,v,w)&=\sqrt{\frac{2}{\pi^3(v-1)^3(w-1)^3}} ab(w v-1) \times \\
&\exp \left(-\frac{1}{2}\left(b^2 w+2ab+a^2 v +y^2 \frac{w v -1}{(v-1)(w-1)}\right) \right) \ind_{\{w,v>1\}} \ind_{\{a,b,y>0\}}
\end{align*}
In particular, the following marginals take simpler forms.
\begin{itemize}
\item{The joint density of $(K'(1),I(1),K(1)-B(1))$ at (a,b,y) is
\[
f_3(a,b,y)=4y(a+b+y)\phi(a+b+y) \ind_{\{a,b,y>0\}}
\]}
\item{The conditional density of $D_1-1$ at $t$ given $(K'(1)=a,I(1)=b,K(1)-B(1)=y)$ is given by
\[
g(t)=\frac{a}{a+b+y} f_{a,y}(t)+\frac{b+y}{a+b+y} f^{*}_{a,y}(t)
\]
where $f_{a,y}$ and $f_{a,y}^{*}$ are respectively the inverse Gaussian and size-biased inverse Gaussian densities with parameters $(a,y)$.}
\end{itemize}
\end{prop}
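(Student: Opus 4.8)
The plan is to identify the face of $K$ straddling time $1$ through the slope picture, decompose the Brownian path at its two endpoints, and read off the joint density from that decomposition. The starting observation is that, for almost every path, the event $\{K'(1)=a,\ I(1)=b,\ G_1=g,\ D_1=d,\ K(1)-B(1)=y\}$ coincides with the event that $\sup_{t\ge 0}B_{-a}(t)=b$, this supremum being attained exactly at the two times $g<1<d$, and $B_{-a}(1)=b-y$: on $[g,d]$ the majorant is the line of slope $a$ through $(g,\,ag+b)$, so $B_{-a}\le b$ there, while for $t<g$ the slope of $K$ exceeds $a$ and for $t>d$ it is below $a$, which by concavity forces $B_{-a}(t)<b$ in both cases. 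In particular $b$ is the global maximum $M_a$ of $B_{-a}$, and $G_1$, $D_1$ are its first and last argmax. Because $\{M_a\text{ is attained at two distinct times}\}$ has probability zero for a fixed $a$, I would pass to the point process $\Pi$ of \emph{all} faces of $K$ --- one point per face, carrying the marks (slope, left endpoint, intercept, and the excursion of $K-B$ over that face) --- and invoke Campbell's formula: since exactly one face straddles time $1$, the joint law of $(K'(1),I(1),G_1,D_1,(K-B)|_{[G_1,D_1]})$ is $\int \ind_{\{g<1<g+\ell\}}\,(\,\cdot\,)\,\Lambda$, where $\Lambda$ is the intensity measure of $\Pi$.

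\emph{Path decomposition of a face.} Conditioning on a face $[g,d]$ of slope $a$ and intercept $b$ (so $B(g)=ag+b$ and $B(d)=ad+b$), the Markov property at $g$ and at $d$ splits $B$ into three conditionally independent pieces, whose laws I would identify from the classical path decompositions of Brownian motion with drift (time reversal and Williams' decomposition at the maximum). On $[0,g]$: a Brownian bridge from $(0,0)$ to $(g,ag+b)$ conditioned to stay below the line $t\mapsto at+b$, which after reversing time from $g$ becomes a three-dimensional Bessel-type process started at $0$ and reaching level $b$ at time $g$; this piece contributes the Gaussian weight $\exp\!\big(-\tfrac{(ag+b)^2}{2g}\big)=\exp\!\big(-\tfrac{B(G_1)^2}{2G_1}\big)$ together with a $\chi_3$-type polynomial prefactor. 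On $[g,d]$: by Groeneboom's theorem $K-B$ is a Brownian excursion of length $\ell=d-g$ independent of everything else, and the bridge transition density over time $\ell$ between the levels $ag+b$ and $ag+b+a\ell$ contributes $\ell^{-1/2}\exp(-\tfrac{a^2\ell}{2})$. On $[d,\infty)$: the reflected path $b-B_{-a}(d+\cdot\,)$ is a $\text{BES}(3,a)$ started at $0$, contributing the remaining factor involving $a$. Multiplying these weights yields $\Lambda$.

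\emph{Assembly and marginals.} The fifth coordinate $y=K(1)-B(1)$ is the value at time $1$ of the excursion of $K-B$ over $[G_1,D_1]$; since the normalized Brownian excursion is a $\text{BES}(3)$ bridge from $0$ to $0$, its value at relative time $u$ has density $\sqrt{2/\pi}\,x^2\,(u(1-u))^{-3/2}\exp(-x^2/2u(1-u))$, and substituting $u=(1-g)/\ell$, $1-u=(d-1)/\ell$ and then changing variables $(g,\ell)\mapsto (v,w)=(1/g,\,g+\ell)$ (Jacobian $g^2$) produces the $y$-part of $f_5$ and the factors $(v-1)^{-3/2}(w-1)^{-3/2}$. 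Inserting this into the Campbell formula gives $f_5$. For $f_3$ one integrates $f_5$ over $(v,w)$: with $s=v-1$, $r=w-1$ this reduces to the elementary integrals $\int_0^\infty s^{-1/2}e^{-ps-q/s}\,ds=\sqrt{\pi/p}\,e^{-2\sqrt{pq}}$ and $\int_0^\infty s^{-3/2}e^{-ps-q/s}\,ds=\sqrt{\pi/q}\,e^{-2\sqrt{pq}}$, whose cross-terms $e^{-ab},e^{-ay},e^{-by}$ combine with the quadratic terms into $\exp\!\big(-\tfrac12(a+b+y)^2\big)=\exp\!\big(-\tfrac12(2K(1)-B(1))^2\big)$, the polynomial prefactor collapsing to $y(a+b+y)$. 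Finally the conditional law of $D_1-1$ given $(a,b,y)$ is $\int f_5\,dv\,/\,f_3$; probabilistically $D_1-1$ is the time it takes $B_{-a}$ to climb from $B_{-a}(1)=b-y$ up to level $b$ conditioned on this occurring, hence a drift-$a$, level-$y$ inverse Gaussian, and carrying out the $v$-integral exhibits the density as $\tfrac{a}{a+b+y}f_{a,y}(t)+\tfrac{b+y}{a+b+y}f^{*}_{a,y}(t)$, the two terms corresponding to a first-passage time $f_{a,y}$ and a last-passage time $f^{*}_{a,y}$, with weights forced by normalization (the two terms integrate to $a$ and $b+y$).

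\emph{Main obstacle.} The technical heart is the precise determination of $\Lambda$ --- in particular the constant $\pi^{-3/2}$, the prefactor $ab$, and the powers $g^{-3/2}$ and $\ell^{-1/2}$ --- because the conditioning on a fixed slope $a$ that underlies the decomposition is a null event and must be made rigorous, either via Palm calculus for the point process $\Pi$ or via an $\varepsilon$-approximation replacing "slope exactly $a$" by "some face with slope in $(a,a+\varepsilon)$" and letting $\varepsilon\to 0$. Once $\Lambda$ is in hand, the remaining steps are routine Gaussian and inverse-Gaussian calculus.
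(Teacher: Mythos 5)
Your route is, in substance, the one the paper takes: reduce everything to the law of the marks of the face of $K$ straddling time $1$, obtained from the first-moment (Campbell) measure of the point process of faces; use Williams' path decomposition for the data at the left vertex; use Groeneboom's excursion description to get the conditional law of $K(1)-B(1)$ given $(G_1,D_1)$ as $\sigma\chi_3$ with $\sigma^2=(1-G_1)(D_1-1)/(D_1-G_1)$; then do Gaussian and inverse-Gaussian calculus. Your evaluation of $f_3$ via $\int_0^\infty s^{-1/2}e^{-ps-q/s}\,ds$ and $\int_0^\infty s^{-3/2}e^{-ps-q/s}\,ds$, and your identification of the law of $D_1-1$ as the stated mixture, are exactly the integrations the paper leaves to the reader.

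The gap is the one you flag yourself: you never determine the intensity measure $\Lambda$ of the marked face process, and you propose to obtain it by an $\varepsilon$-approximation/line-crossing argument. That is precisely the hard part, and it amounts to rederiving Groeneboom's original proof (the very thing the paper's Remark 1.3 says it is avoiding). The paper's proof is short because it takes $\Lambda$ from the literature: Groeneboom's Theorem 2.1 states that the pairs (inverse slope $r$, face length $\Delta\tau_r$) form a Poisson point process $\cH$ with intensity $\rho(r,t)=r^{-2}t^{-1/2}\phi(\sqrt{t}/r)$, and independence of $\cH$ over disjoint slope windows then gives in two lines the factorization
\[
\bP[S(1)\in ds,\ G_1\in du,\ D_1-G_1\in dx,\ B(G_1)\in dz]=\bP[B(\tau_s)\in dz,\ \tau_s\in du]\,\rho(s,x)\,ds\,dx,
\]
where $S(1)=1/K'(1)$ and $(\tau_s,B(\tau_s))=(\sigma_{1/s},B(\sigma_{1/s}))$ has the explicit density from Williams' decomposition. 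This factorization \emph{is} your Campbell formula; to complete your argument you should cite the Poisson description rather than reprove it, since the Palm-calculus framing by itself produces none of the constants you need. A smaller point: the heuristic that $D_1-1$ is "the first-passage time of $B_{-a}$ from $b-y$ up to $b$, hence inverse Gaussian" is not correct as stated -- the face structure forces the mixture with the size-biased density $f^*_{a,y}$, as you then correctly record, and the mixture has to come out of the $v$-integration (equivalently, out of the first-passage/last-passage dichotomy at the right vertex), not from a bare first-passage reading.

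One warning if you push the computation through: the displayed $f_5$ appears to contain typos. As written it has no polynomial factor in $y$, which is incompatible with the Bessel(3)-bridge law of $K(1)-B(1)$ given $(G_1,D_1)$, and integrating it over $(v,w)$ yields $4(a+b+y)\phi(a+b+y)/y$ rather than the stated $f_3$. Carrying out the scheme above gives instead
\[
f_5(a,b,y,v,w)=\sqrt{\tfrac{2}{\pi^3}}\,a\,b\,y^2\,\frac{wv-1}{\big((v-1)(w-1)\big)^{3/2}}\exp\Big(-\tfrac12\Big(b^2v+2ab+a^2w+y^2\tfrac{wv-1}{(v-1)(w-1)}\Big)\Big)
\]
on $\{v,w>1,\ a,b,y>0\}$, which does integrate to the stated $f_3$ and is consistent with the stated conditional law of $D_1-1$.
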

\begin{remark}
Some of these identities are implicit in the work of Groeneboom \cite{gro83} in his proofs using probability estimates of events about the position of Brownian bridges relative to deterministic line, 
and some projections of these identities appear also in Carolyn and Dykstra \cite{cd01}, but we give here a simpler proof via the Poisson description. 
\end{remark}
A straightforward consequence of the above proposition is that 
\begin{equation}
2K(1)-B(1) \ed \chi_5
\end{equation}
which coincides with the one-dimensional marginal at time $1$ of a five-dimensional Bessel process. 
Moreover, by integration it is easy to see that the joint density of the pair $(K(1),K(1)-B(1))$ is equal to
\[
f_{(K(1),K(1)-B(1))}(k,y)=4ky(k+y)\phi(k+y)\ind_{\{k,y>0\}}
\]
and so the pair $(K(1),K(1)-B(1))$ is exchangeable. This observation is reminiscent of the counterpart process $(2M(t)-B(t))_{t \ge 0}$, where $M$ is the running maximum of $B$ defined as 
\[
M(t)=\max_{0 \le s \le t} B(s)
\]
Indeed, by results of \cite{pitroger81,pit75} we have that
\[
2M(1)-B(1) \ed \chi_3
\]
and that the pair $(M(1)-B(1),M(1))$ is exchangeable. More is known actually, the process $(2M(t)-B(t))_{t \ge 0}$ has the law of the radial part of a three-dimensional Brownian motion. 
This observation led us to naturally consider the following conjecture:
\begin{conjecture}\label{conjec}
The process $(2K(t)-B(t))_{t \ge 0}$ has the law of the radial part of a five-dimensional Brownian motion
\end{conjecture}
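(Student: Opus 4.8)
The plan for attacking Conjecture~\ref{conjec} would be to show that $Z:=(2K(t)-B(t))_{t\ge0}$ is a time-homogeneous strong Markov process whose transition semigroup is that of a five-dimensional Bessel process started at $0$; by Brownian scaling of the pair $(K,B)$ such a semigroup is automatically scale invariant and already carries the correct one-dimensional marginals $Z(t)\ed\sqrt t\,\chi_5$, so the entire content is the Markov property together with the identification of the generator as $\tfrac12\frac{d^2}{dx^2}+\tfrac2x\frac{d}{dx}$. The structural difficulty is that $K(t)$ is a functional of the \emph{entire} path of $B$, so $Z$ is not adapted to the Brownian filtration and one must exhibit a filtration in which $Z$ is Markov. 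In Pitman's theorem $2M-B\ed R_3$ this is supplied by the identity $M_t=\inf_{s\ge t}R_3(s)$, which reconstructs the hidden variable $M$ — hence $B$ — from the future of the observed process. The first, and we expect decisive, task here is to find the analogous functional: writing $[G_t,D_t]$ for the segment of $K$ straddling $t$, on that segment one has $Z(s)=K(s)+e(s-G_t)$ with $K(s)$ the linear chord of slope $K'(t)$ and $e$ a Brownian excursion of length $D_t-G_t$, so that $K_t<\inf_{s\ge t}Z_s$ whenever $t$ lies strictly inside a segment and the naive "future-infimum" recipe fails. One would need to recover $K_t$ — equivalently the vertex data $(G_t,D_t,K'(t),I(t))$ of Proposition~\ref{fixed-time} — as a measurable functional of $\{Z(s):s\ge t\}$, presumably by characterizing the vertices of $K$ as the instants at which $Z$ has $\mathrm{BES}(3)$-from-$0$ local behaviour rather than locally-Brownian-with-drift behaviour.

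Absent a direct argument, we would turn to a discrete approximation, in the spirit of Pitman's original proof of $2M-B\ed R_3$. Let $S$ be a random walk with i.i.d.\ mean-zero, finite-variance increments, and let $\hat S$ be its concave majorant on $\{0,\dots,n\}$. The combinatorics of $\hat S$ is well understood from Sparre Andersen's theory and the point-process description in \cite{abrampitm,bravpitm}: conditionally on the partition of $\{1,\dots,n\}$ into faces with their slopes, the increments of $S$ inside a face form an exchangeable sequence conditioned to keep $S$ below its chord, so $\hat S-S$ is a concatenation of independent discrete bridge-excursions. The steps would be: (i) prove that $(2\hat S_k-S_k)_{0\le k\le n}$, read forward in $k$, is a Markov chain — this is precisely where a genuinely new identity is needed, since (unlike the running-maximum case) the dependence of $\hat S$ on the whole walk forces the chain to carry, in a single extra coordinate, enough of the face data to propagate itself; (ii) compute its one-step transition kernel from the face description; (iii) verify that under diffusive rescaling this kernel converges to the $\mathrm{BES}(5)$ generator; (iv) transfer the conclusion to continuous time via $(S,\hat S)\Rightarrow(B,K)$.

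A complementary route exploits the slope/time duality emphasized in the introduction together with the explicit laws of Proposition~\ref{fixed-time}. Fix slopes $\mu_1>\mu_2>\cdots\downarrow0$ and sample $Z$ at the times $\sigma_{\mu_j}=\text{argmax}^{+}_{t\ge0}\{B(t)-\mu_j t\}$, which are the vertices of $K$ of slope $\le\mu_j$; the Poisson description of the faces then yields the joint law of the sampled values $(Z(\sigma_{\mu_j}))_j$ and of the intervening fragments of $Z$, each a drift-$2\mu_j$ Brownian excursion over a segment of inverse-Gaussian-type length as encoded by the density $g$ of Proposition~\ref{fixed-time}. One then tries to recognize this as $R_5$ decomposed at a nested family of first and last passage levels — using Williams-type decompositions of Bessel processes, and the fact that a Bessel process watched between two passage levels is a Bessel bridge — checking consistency with $Z(t)\ed\sqrt t\,\chi_5$ and with the exchangeability of $(K(1),K(1)-B(1))$, and finally upgrading finite-dimensional agreement at these random times to an identity of processes. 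In all three routes we anticipate the recurring obstruction to be the same: $R_5$ never returns to $0$ after time $0$, unlike $R_3$, so the tidy excursion-from-a-point picture underlying Pitman's theorem has no counterpart, and the decomposition of $Z$ along the vertices of $K$ must instead be matched with an It\^o-excursion decomposition of $R_5$ relative to its future infimum (or to its passage levels), whose excursion law must be computed and shown to agree. One should also bear in mind that the conjecture could fail at the process level despite every tested marginal matching, so a deliberate search for a discrepancy — for instance in a three-time distribution of $Z$, or in the joint law of $Z$ with occupation statistics of its vertex set — ought to be part of the program.
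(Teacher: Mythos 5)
The statement you were given is a \emph{conjecture}: the paper does not prove it, and neither do you. Your submission is a research program rather than a proof, so the verdict has to be that the gap is the entire argument — every one of your three routes terminates at a step you yourself flag as requiring ``a genuinely new identity,'' and that step is precisely the open problem. Still, it is worth recording how your program lines up with what the authors actually establish. They supply exactly the partial evidence your first route calls for: Proposition \ref{fixed-time} gives $2K(1)-B(1)\ed\chi_5$; Theorem \ref{4-dim} exhibits the four-dimensional time-homogeneous Markov process $\Psi=(K'(t),K(t),K(t)-B(t),D_t-t)$, intended as the vehicle for a Rogers--Pitman intertwining argument under the projection $(a,k,y,\omega)\mapsto k+y$; and Proposition \ref{local-2k-b} carries out the generator identification you sketch, showing that $\lim_{h\downarrow 0}h^{-1}\,\bE[\varphi(2K(1+h)-B(1+h))-\varphi(z)\mid 2K(1)-B(1)=z]=\frac{2}{z}\varphi'(z)+\frac12\varphi''(z)$, matching the $\mathrm{BES}(5)$ generator. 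What blocks them — and what your route correctly isolates as the decisive obstruction — is the filtering problem: unlike $M_t=\inf_{s\ge t}(2M-B)(s)$ in Pitman's theorem, there is no known measurable reconstruction of $K$, or of the vertex data $(G_t,D_t,K'(t))$, from the path of $2K-B$, and the intertwining condition for $\Psi$ is intractable because the post-vertex part of its semigroup involves the convex minorant of a three-dimensional Bessel process with drift (Proposition \ref{drift-fixed}).

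Your discrete random-walk route and your slope-sampling route do not appear in the paper and are reasonable additional directions; the authors instead offer an enlarged-filtration idea ($\cG_t=\cF_t\vee\sigma\{K(u):u\ge 0\}$, in which $2K-B$ is a semimartingale) and an equivalent bridge reformulation via time inversion. But in each case the same issue recurs: matching one-dimensional marginals, quadratic variation, scaling and time-inversion invariance, and even the local generator does not substitute for the Markov (Feller) property itself. Your closing caveat that one should also hunt for a disproof is well taken — nothing in your program or in the paper rules out failure at the level of two- or three-time distributions — so you should present this as a statement of evidence and obstructions, not as progress toward a proof.
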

We discuss this conjecture in Section \ref{2k-b}.  Motivated by this conjecture, we were led to explore Markovian properties encoded in the joint law of $B$ and $K$. 
It is tempting to claim that the three-dimensional process $(K'(t),K(t),B(t))_{t \ge 0}$ is Markovian. However this assertion turns out to be false.  Indeed, from Proposition \ref{fixed-time} it is not hard to see that $D_1$ is \textit{not} conditionaly independent of $G_1$ given $(K'(1),K(1),B(1))$. Together with the fact that $G_1$ is measurable with respect to $\sigma \{ B(u), K(u); u \le 1\}$ and $D_1$ is measurable with respect to $\sigma \{ B(u) , K(u); u \ge 1\}$, this implies that the processes $(K'(u),K(u),B(u))_{u \ge 1}$ and $(K'(u),K(u),B(u))_{u \le 1}$ are not conditionally independent given $(K'(1),K(1),B(1))$. Nonetheless, a Markovian structure is present if we include the next vertex of the concave majorant after time $t$, as indicated in the following theorem:
\begin{thm}\label{4-dim}
The process
\[
(\Psi(t):=(K'(t),K(t),K(t)-B(t),D_t-t) , t \ge 0)
\]
is a time-homogenous Markov process. Its semi-group $P_t((a,k,y,w),\cdot)$ is given as follows
\begin{itemize}
\item{If $t<w$, then $P_t((a,k,y,w),\cdot)$ has the law of 
\[
(a,k+at,Z(t),w-t)
\]
where $Z$ is a three-dimensional Bessel bridge between $(0,y)$ and $(w,0)$.}
\item{If $t \ge w$, then $P_t((a,k,y,w),\cdot)$ has the law of 
\[
(a-C'(t-w),k+at-C(t-w),R(t-w)-C(t-w),D_{t-w}^{R}-(t-w))
\]
where $R$ is a $\text{BES}^{0}(3,a)$, $C$ its convex minorant and $D_{u}^{R}$ is the vertex of $C$ after time $u$.}
\end{itemize}
\end{thm}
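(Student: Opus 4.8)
The plan is to establish the Markov property of $\Psi$ by explicitly identifying the conditional law of the whole future of $(K,B)$ past time $t$ given $\Psi(t)$, and then checking this future law depends only on $\Psi(t)$, not on the past. The key structural input is the Poisson/Groeneboom description of $K$ together with the excursion-theoretic description of $K-B$: conditionally on $K$, the process $K-B$ is a concatenation of independent Brownian excursions over the linear segments of $K$, which over a segment of slope $a$ and horizontal length $\ell$ joining heights $y_0$ and $y_1$ on the graph of $B$ is precisely a three-dimensional Bessel bridge from $y_0$ to $y_1$ on $[0,\ell]$ (this is the standard identity $K-B$ restricted to a segment $\ed$ BES$(3)$ bridge, which I will record). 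So the state $\Psi(t)=(a,k,y,w)$ encodes: the current slope $a$; the current height $k=K(t)$; the current "gap" $y=K(t)-B(t)$; and the remaining horizontal length $w=D_t-t$ until the right endpoint of the current segment, at which point the gap must return to $0$.

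\textbf{Step 1: The case $t<w$ — finishing the current segment.} Here the future up to time $D_t$ stays on the same linear piece of $K$: $K'$ is constant equal to $a$, so $K(t+s)=k+as$, and $D_{t+s}-(t+s)=w-s$. The only randomness is in $K-B$ along the remainder of the current segment. By the excursion description, conditionally on everything up to time $t$ (equivalently on $\Psi(t)$ and the fact that $t$ is interior to a segment), the future gap process $(K(t+s)-B(t+s))_{0\le s\le w}$ is a three-dimensional Bessel bridge from $y$ to $0$ on $[0,w]$ — this uses the Markov (bridge) property of the BES$(3)$ bridge itself to restrict the excursion over the full segment to its terminal portion. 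Hence $Z(s)=K(t+s)-B(t+s)$ is exactly as stated, and the transition law over a time increment shorter than $w$ is as claimed.

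\textbf{Step 2: The case $t\ge w$ — passing the vertex and regenerating.} At time $D_t=t+w$ we are at a vertex: $K=B$, the gap is $0$, and $K'(D_t-)=a$. The content here is a \emph{regeneration} statement: given $\Psi(t)=(a,k,y,w)$, the post-$D_t$ process $(K'(D_t+s),K(D_t+s)-K(D_t),\,K(D_t+s)-B(D_t+s),\,D_{D_t+s}-(D_t+s))_{s\ge 0}$ is independent of the past and has the law of $(a-C'(s),\,as-C(s),\,R(s)-C(s),\,D_s^R-s)$ where $R$ is BES$^0(3,a)$ and $C$ its convex minorant. This is the crucial point and I expect it to be the main obstacle. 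The mechanism is the duality between the concave majorant of $B$ and the convex minorant of $B$ reflected/tilted: after the vertex at $D_t$, the Brownian path $B$ started afresh (with its slope "$a$" reference line attached) has its concave majorant's future slopes forming the continuation of the slope Poisson process, which are all $<a$; subtracting the line of slope $a$ through the vertex and negating turns "concave majorant of $B$" into "convex minorant of a Brownian motion," and the gap $K-B$ together with the convex minorant assembles into the pair $(R,C)$ with $R=$ BES$^0(3,a)$ by the same segment-wise BES$(3)$-bridge identity glued across vertices (this gluing is exactly the statement that a BES$(3)$ with drift-type slope structure decomposes over the segments of its own convex minorant — cf. the companion result on convex minorants of three-dimensional Bessel processes with drift mentioned in the abstract). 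Concretely I would: (i) identify, via Groeneboom's Poisson description, the joint law of the post-vertex slopes/lengths as the same Poisson point process; (ii) show the reflected-negated path $a\cdot - B$ after $D_t$ restricted to the segments of its convex minorant is a concatenation of BES$(3)$ bridges, which patch together into BES$^0(3,a)$ — here one uses that the slope at the first vertex is $a$ so the drift of the Bessel process is $a$; (iii) read off $K'(D_t+s)=a-C'(s)$, $K(D_t+s)=K(D_t)+as-C(s)$, $K(D_t+s)-B(D_t+s)=R(s)-C(s)$, and $D_{D_t+s}-(D_t+s)=D_s^R-s$.

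\textbf{Step 3: Assembling time-homogeneity and the Markov property.} Given Steps 1 and 2, a general increment $t\mapsto t+s$ is handled by splitting at $D_t$ if $s>w=D_t-t$: run the deterministic-plus-BES-bridge dynamics of Step 1 until the vertex, then apply the regeneration of Step 2 and run the new $(R,C)$ dynamics for the remaining time $s-w$. Because the description in each phase depends on the past only through $\Psi(t)$ (Step 1: through $(a,k,y,w)$; Step 2: through $a$ alone, plus the updated position), the Chapman–Kolmogorov consistency and time-homogeneity follow from the strong Markov property of $B$ at the stopping-time-like vertex $D_t$ (which is a stopping time for the filtration $\sigma\{B(u),K(u):u\le\cdot\}$ enlarged by $D_t$, as noted in the excerpt's discussion that $D_t$ is measurable with respect to the post-$t$ $\sigma$-field but pins down the regeneration). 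The one subtlety to verify carefully is that conditioning on the event "$t$ is interior to a segment with residual length $w$" is compatible with the regular-conditional-probability statements — this is where I would lean on Proposition \ref{fixed-time} to legitimize the conditional densities, and on the fact that a.s. $t$ is interior to a unique segment.

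\textbf{Main obstacle.} The genuinely non-routine step is the regeneration identity in Step 2, specifically proving that the glued concatenation of the BES$(3)$ bridges over the segments of the convex minorant of $a\cdot\mathrm{id}-B$ (post-vertex) is a \emph{Markov process}, namely BES$^0(3,a)$, and that $C$ is its convex minorant — i.e. that the segment-wise bridge description is consistent with the global Bessel law. This is the analogue, for Bessel processes with drift, of Groeneboom's theorem for Brownian motion, and I would either invoke the companion result on convex minorants of BES$(3,\mu)$ announced in the abstract or prove it directly by matching finite-dimensional distributions using the explicit inverse-Gaussian segment-length densities $f_{a,y}$ and $f^*_{a,y}$ from Proposition \ref{fixed-time} together with the BES$(3)$-bridge transition densities.
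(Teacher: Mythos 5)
Your proposal follows the same overall architecture as the paper's proof: Groeneboom's Poisson description of the slopes plus the conditional excursion decomposition of $K-B$; the case $t<w$ handled by conditioning the straddling excursion at an interior point (splitting it into two Bessel bridges, so the remaining piece is a bridge from $y$ to $0$ of length $w$); and the case $t\ge w$ handled by a regeneration at the vertex $D_t$. The one substantive difference is how the regeneration is justified. You identify as the ``main obstacle'' the task of proving that the glued concatenation of BES$(3)$ bridges over the segments of the convex minorant of the tilted post-vertex path is globally a BES$^0(3,a)$, and you propose to verify this by matching finite-dimensional distributions via the inverse-Gaussian segment densities. The paper avoids this entirely: on the event $K'(t)=a$ one has $D_t=\sigma_a$, and Williams' path decomposition (recalled in Section 2) states directly that $(au-B(\sigma_a+u)+B(\sigma_a))_{u\ge0}$ is a $\text{BES}^0(3,a)$, independent of the pre-$\sigma_a$ path given $a$; the convex minorant of this process is then $au-K(\sigma_a+u)+K(\sigma_a)$ by the affine flip, and the four coordinates of $\Psi$ after the vertex read off exactly as you state in (iii). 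So what you flag as the hard gluing/consistency step is a one-line consequence of Williams, and the companion result on convex minorants of $\text{BES}(3,\mu)$ (Proposition \ref{bessel-fixed}) is itself derived from that decomposition rather than being an independent input. A smaller point: in Step 3 you appeal to ``the strong Markov property of $B$ at the stopping-time-like vertex $D_t$,'' but $D_t$ is not a stopping time of $B$ (it is a last-exit-type time); the conditional independence of past and future given $\Psi(t)$ should instead be extracted, as the paper does, from the independence of increments of the slope-indexed process $\tau$ together with the conditional independence of the excursions — your own invocation of the Poisson description in Step 2 already contains the needed ingredient. With the Williams shortcut substituted for your finite-dimensional-distribution matching, your argument is correct and complete.
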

Similarly to Proposition \ref{fixed-time}, the following proposition gives the joint law of the Bessel process with drift and its convex minorant at a fixed time, which completes the description of the law of the Markov process $\Psi$. 
\begin{prop}\label{drift-fixed}
Fix $\mu>0$. Let $(R(t))_{t \ge 0}$ be a $\text{BES}^{0}(3,\mu)$ and $(C(t))_{t \ge 0}$ its convex minorant. Let $(G_t,D_t)$ be the left-hand and right-hand vertices of $C$ straddling $t$. Then for $(\alpha,u,x,l,y)$ such that $0<\frac{l}{\mu}<u<t<x+u$, $y>0$ and $0<\alpha<\mu$, we have
\begin{align*}
\bP[C'(t)\in d\alpha,G_t \in du, D_t-G_t \in dx, R(G_t) \in dl, R(t)-C(t) \in dy]=\\
 2\frac{\phi((\mu-\alpha)\sqrt{x})}{\sigma_t(x,u)^3 \sqrt{x}} h^{\mu,\mu-\alpha}\left(u,\mu u-l\right)y^2\phi \left(\frac{y}{\sigma_t(x,u)}\right)d\alpha du dx dl dy
\end{align*}
where $\sigma_t(x,u):=\sqrt{\frac{(t-u)(x+u-t)}{x}}$, and 
\begin{align*}
\bP[C'(t) \in d\alpha, G_t=0, D_t-G_t \in dx, R(G_t)=0,R(t)-C(t) \in dy]=\\
2\frac{\phi((\mu-\alpha)\sqrt{x})}{\sqrt{x}}\left(1-\frac{\alpha}{\mu}\right) \frac{y^2}{\sigma_t(x,0)^3} \phi \left(\frac{y}{\sigma_t(x,0)}\right)d\alpha dx dy
\end{align*}
The function $h^{a,b}$ is given by the following expression
\[
h^{a,b}(s,z)=\frac{2b}{a} \bE \left[ \left(\sqrt{\left(a-\frac{z}{s}\right)\left(\frac{z}{s}-b\right)}-\frac{V}{\sqrt{s}}\right)_{+}^2\right]\frac{\phi(\frac{z}{\sqrt{s}})}{\sqrt{s}} \ind_{\{ bs \le z \le as\}}
\]
for $0<b<a$ and where $V \ed \cN(0,1)$ is a standard normal random variable.
\end{prop}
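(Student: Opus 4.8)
The plan is to realize the pair $(R,C)=(\text{BES}^{0}(3,\mu),\text{ convex minorant of }R)$ inside a single standard Brownian motion via Williams' path decomposition, and then to read off Proposition \ref{drift-fixed} from the Poisson description of the concave majorant of Brownian motion underlying Proposition \ref{fixed-time}. Let $B$ be a standard Brownian motion with concave majorant $K$ on $[0,\infty)$ and put $\sigma_{\mu}=\text{argmax}^{+}_{t\ge 0}(B(t)-\mu t)$. Since $B-\mu\cdot$ tends to $-\infty$ and has concave majorant $K-\mu\cdot$, the time $\sigma_{\mu}$ is a.s.\ the unique vertex of $K$ at which the slope crosses $\mu$, so $K'(\sigma_{\mu}-)>\mu>K'(\sigma_{\mu}+)$ and $B(\sigma_{\mu})=K(\sigma_{\mu})$. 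By Williams' decomposition of $B-\mu\cdot$ at its maximum (see \cite{revuzyor}), the process $R(u):=B(\sigma_{\mu})+\mu u-B(\sigma_{\mu}+u)$, $u\ge 0$, is a $\text{BES}^{0}(3,\mu)$, independent of $(\sigma_{\mu},B(\sigma_{\mu}))$ and of $(B(v))_{v\le\sigma_{\mu}}$. Because $\sigma_{\mu}$ is a vertex of $K$, the restriction $K|_{[\sigma_{\mu},\infty)}$ is the concave majorant of $B|_{[\sigma_{\mu},\infty)}$, so the convex minorant of $R$ is
\[
C(u)=B(\sigma_{\mu})+\mu u-K(\sigma_{\mu}+u),\qquad u\ge 0 ,
\]
whence $C'(u)=\mu-K'(\sigma_{\mu}+u)\in(0,\mu)$, $R(u)-C(u)=K(\sigma_{\mu}+u)-B(\sigma_{\mu}+u)$, the vertices of $C$ are those of $K$ to the right of $\sigma_{\mu}$ (shifted by $\sigma_{\mu}$), and the faces of $C$ are exactly the faces of $K$ of slope $<\mu$, translated to start at $0$ and with slopes reflected through $\mu$. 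Consequently the vector $(C'(t),G_{t},D_{t}-G_{t},R(G_{t}),R(t)-C(t))$ of Proposition \ref{drift-fixed} has the law of the corresponding vector read off from $(K,B)$ at the offset time $\sigma_{\mu}+t$: writing $b:=\mu-C'(t)$ for the slope of the $K$-face straddling $\sigma_{\mu}+t$ and $(\ell_{i},s_{i})$ for the lengths and slopes of the $K$-faces of slope in $(b,\mu)$, one has $G_{t}=\sum_{i}\ell_{i}$, $\mu G_{t}-R(G_{t})=\sum_{i}s_{i}\ell_{i}$, and $R(t)-C(t)$ is the value at offset $t-G_{t}$ of the Brownian excursion of $K-B$ over the straddling face.

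By the Poisson description of Groeneboom used for Proposition \ref{fixed-time}, the faces of $K$, recorded as $(\text{length},\text{slope})$ pairs, form a Poisson point process on $(0,\infty)^{2}$ with intensity $(2\pi\ell)^{-1/2}e^{-\ell s^{2}/2}\,d\ell\,ds$, and faces with slopes in disjoint intervals are independent; in particular the faces of slope $>\mu$ (which determine $\sigma_{\mu}$ and $B(\sigma_{\mu})$) are independent of those of slope $<\mu$ (which determine $C$). Conditionally on the straddling face having slope $b$ and length $x$, the excursion of $K-B$ on it is a Brownian excursion of length $x$, that is, a three-dimensional Bessel bridge from $0$ to $0$ of length $x$, so its value at offset $t-G_{t}$ has density $2y^{2}\phi(y/\sigma_{t}(x,G_{t}))/\sigma_{t}(x,G_{t})^{3}$ and is independent of everything else; this is the Bessel-bridge factor in both displayed formulas. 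The straddling face occurs with intensity $(2\pi x)^{-1/2}e^{-xb^{2}/2}\,dx\,db=\phi(b\sqrt{x})x^{-1/2}\,dx\,db$, and, independently of it, the faces of slope in $(b,\mu)$ contribute the pair $\bigl(\sum_{i}\ell_{i},\sum_{i}s_{i}\ell_{i}\bigr)=(G_{t},\,\mu G_{t}-R(G_{t}))$, whose joint law is obtained by inverting the bivariate Laplace transform
\[
\exp\!\Bigl(-\int_{b}^{\mu}\Bigl(\tfrac{1}{s}-\tfrac{1}{\sqrt{s^{2}+2\lambda+2\kappa s}}\Bigr)\,ds\Bigr);
\]
the claim is that this inverse is exactly $h^{\mu,b}(G_{t},\mu G_{t}-R(G_{t}))$. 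Imposing the support constraint $G_{t}\le t<D_{t}$ and substituting $b=\mu-\alpha$ then gives the first displayed formula, while the complementary event that there is no $K$-face of slope in $(b,\mu)$ has probability $\exp(-\int_{b}^{\mu}ds/s)=b/\mu=1-\alpha/\mu$ with $G_{t}=R(G_{t})=0$, and produces the second (boundary) formula; one checks directly that these match the stated expressions.

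Two steps require real work. The first is the justification of the Williams reduction at the level of whole majorants and minorants — chiefly the identity $C(u)=B(\sigma_{\mu})+\mu u-K(\sigma_{\mu}+u)$ together with the independence statements — which is conceptual but routine once one knows that $\sigma_{\mu}$ is a vertex of $K$ and that Groeneboom's point process decouples across slope bands. The substantial obstacle is the explicit evaluation of $h^{a,b}$, i.e.\ the Laplace inversion above. It is convenient to read $h^{a,b}(s,z)$ as the joint density of $\sigma^{R}_{a-b}$ and $a\,\sigma^{R}_{a-b}-R(\sigma^{R}_{a-b})$ for $R$ a $\text{BES}^{0}(3,a)$, where $\sigma^{R}_{a-b}=\text{argmin}^{+}_{u}(R(u)-(a-b)u)$ — equivalently, of $(\sigma_{b}-\sigma_{a},\,B(\sigma_{b})-B(\sigma_{a}))$ for the Brownian concave majorant — and then to condition on the total length $s$ of the slope band $(b,a)$ and on the length-weighted mean slope $z/s\in(b,a)$: the value $R(\sigma^{R}_{a-b})$ is a constrained minimum of a Brownian-bridge-type process, and the cubic ($y^{2}$) weighting intrinsic to dimension three converts the resulting one-dimensional integral into the expectation $\bE\bigl[(\sqrt{(a-z/s)(z/s-b)}-V/\sqrt{s})_{+}^{2}\bigr]$ with $V\ed\cN(0,1)$. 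I expect this last computation to be the most laborious part. As a byproduct one recovers a recursive stick-breaking description of the faces of $C$: the successive slopes $\alpha_{0}<\alpha_{1}<\cdots$ satisfy $\mu-\alpha_{j}=\mu\prod_{i\le j}U_{i}$ with i.i.d.\ uniform $U_{i}$ on $(0,1)$, the face lengths are conditionally independent given the slopes with $(\mu-\alpha_{j})^{2}$ times the $j$-th length distributed as $\chi_{1}^{2}$, and the excursions over the faces are independent three-dimensional Bessel bridges from $0$ to $0$; this gives an alternative route to the proposition.
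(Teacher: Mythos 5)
Your overall architecture coincides with the paper's: realize $(R,C)$ as $\bigl(\mu t-B(\sigma_{\mu}+t)+B(\sigma_{\mu}),\ \mu t-K(\sigma_{\mu}+t)+K(\sigma_{\mu})\bigr)$ via Williams' path decomposition, read the straddling-face intensity $\phi((\mu-\alpha)\sqrt{x})x^{-1/2}\,d\alpha\,dx$ and the atom of mass $1-\alpha/\mu$ off Groeneboom's Poisson description, and supply the factor $2y^{2}\sigma_t(x,u)^{-3}\phi(y/\sigma_t(x,u))$ from the conditional Brownian-excursion (Bessel-bridge) law of $R-C$ over the straddling face. All of this is correct and is exactly how the paper assembles the two displayed formulas.

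The gap is the identification of the joint law of $(G_t,\ \mu G_t-R(G_t))=(\sigma_{\mu-\alpha}-\sigma_{\mu},\ \mu(\sigma_{\mu-\alpha}-\sigma_{\mu})-(B(\sigma_{\mu-\alpha})-B(\sigma_{\mu})))$ with the explicit density $h^{\mu,\mu-\alpha}$. You write down the correct bivariate Laplace transform and then assert that ``the claim is that this inverse is exactly $h^{\mu,b}$,'' deferring the inversion as the most laborious part and offering only a heuristic (conditioning on the band length and its mean slope, a ``constrained minimum of a Brownian-bridge-type process,'' a ``cubic weighting''). That heuristic is not a proof and does not visibly produce the expectation $\bE\bigl[(\sqrt{(a-z/s)(z/s-b)}-V/\sqrt{s})_{+}^{2}\bigr]$; the paper explicitly quotes \c{C}inlar's remark that this inversion is not obvious, which is why it isolates the statement as Theorem \ref{density-zenith}. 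The paper's derivation is different in kind: it estimates $\bP[\sigma_a\in(t_1,t_1+h),\,\sigma_b\in(t_2,t_2+h)]$ directly by conditioning on $B(t_1)$, $B(t_2)$, the running maxima over the two $h$-windows, and $B(t_0)$ at the intersection time $t_0$ of the two tangent lines; it then applies the H\'ajek--\v{S}id\'ak formula (Lemma \ref{bridge-line}) for the probability that a Brownian bridge crosses a line on each of four subintervals, Taylor-expands in $h$, and integrates out the Gaussian law of $B(t_0)$ --- it is this Gaussian integration of a product of three positive parts that produces the $(\cdot)_{+}^{2}$ expectation, and Groeneboom's Corollary 2.1 is then invoked to confirm the total mass $1-b/a$. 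Without some version of this computation (or an honest Laplace inversion) your argument establishes the structure of the answer but not the stated formula. The remaining ingredients of your proposal, including the stick-breaking byproduct, are sound.
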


\subsection{Fixed slope analysis}
Working in terms of slopes, we present here some distributional identities which emerge from study of the path of the Brownian motion and its concave majorant
on $[0,\sigma_{\mu}]$ for a fixed slope $\mu>0$.
Let
\begin{align}\label{vertices}
\cdots < \tau_{\mu,1} < \tau_{\mu,0} :=\sigma_{\mu} < \tau_{\mu,-1}<\tau_{\mu,-2}<\cdots
\end{align}
be an enumeration of the vertices of $K$. By Brownian scaling, we can limit ourselves to the case $\mu=1$. Define
\begin{align}\label{defn-rho}
 \tau_n:=\tau_{1,n} \text{ and } \kappa_n:=B(\tau_n) \text{ and } \rho_n:=\kappa_n-\frac{\Delta \kappa_n}{\Delta \tau_n}\tau_n
\end{align}
where $\Delta x_n=x_{n}-x_{n-1}$ for any sequence $(x_n)_{n \in \bN}$. The following theorem states a non-trivial identity in distribution which comes from an analysis of Markovian behavior of
the faces of $K_{[0,\sigma_1]}$, in the same vein as \cite{pitmanross},Corollary 3]\label{ross-pitman}.

\begin{thm}\label{markov-chain}
The following map on $(0,\infty)^3 \times (0,1)$ 
\[
(t,r,q,u) \mapsto \left(u^2(t+q),u(1-u)(t+q)+ur,\frac{r^2 q}{t(t+q)},\frac{r}{r+(1-u)(t+q)}\right)
\]
is one-to-one and preserves the law of
\[
(\chi_{3}^2 \beta_{1,2}^2,\chi_3^2\beta_{1,2}(1-\beta_{1,2}),\chi_1^2,U)
\]
where $\beta_{a,b}$ is $\text{beta}(a,b)$ and $U \ed \beta_{1,1}$ is uniform $[0,1]$,  with
$\chi_{3}, \beta_{1,2}^2$ and $U$ independent.
As a consequence, both the sequences $\left(\frac{\rho_n}{\sqrt{\tau_n}}\right)_{n \ge 0}$ and $\left(\frac{\kappa_n}{\sqrt{\tau_n}}\right)_{n \ge 0}$ are stationary. Moreover, the former sequence is a Markov chain while the latter sequence is not Markov.
\end{thm}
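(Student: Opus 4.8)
The plan is to realise the faces of $K$ on $[0,\sigma_1]$, read off from the vertex $\tau_0=\sigma_1$ backwards towards $0$, as a stationary Markov chain whose one-step transition is precisely the map in the statement fed an independent uniform variable, and then to deduce the two consequences. As a first step I would reduce the fixed-slope picture to a convex-minorant problem for a diffusion. Since $\sigma_1$ is a vertex of $K$, the restriction $K|_{[0,\sigma_1]}$ is the concave majorant of $B|_{[0,\sigma_1]}$, all of its faces have slope $\ge 1$, and each $\rho_n\ge 0$ (the line carrying the $n$-th face is a supporting line of the concave function $K$ and $K(0)=0$). Applying the time reversal $s\mapsto\sigma_1-s$ turns $K|_{[0,\sigma_1]}$ into the convex minorant of $\widetilde B(s):=B(\sigma_1)-B(\sigma_1-s)$; subtracting a linear function does not move vertices, so the vertices of this convex minorant coincide with those of the convex minorant of $R(s):=\widetilde B(s)-s$. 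By Williams' path decomposition of the drifting Brownian motion $B(t)-t$ at its overall maximum $M:=B(\sigma_1)-\sigma_1$, the level $M$ has the $\mathrm{Exp}(2)$ law and, conditionally on $M$, the process $(R(s))_{0\le s\le\sigma_1}$ is a $\mathrm{BES}^{0}(3,1)$ run up to its last visit $\sigma_1$ of level $M$. Writing $0=s_0<s_1<\cdots\uparrow\sigma_1$ for the images of $\tau_0>\tau_1>\cdots\downarrow 0$, the $s_k$ are the successive vertices of the convex minorant of this $\mathrm{BES}^{0}(3,1)$, and $\tau_n,\kappa_n,\rho_n$ and the face slope $\Delta\kappa_n/\Delta\tau_n$ are explicit functions of the $k$-th face data (length, height $R(s_k)$, slope).

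\emph{The Markov chain and its stationary law.} At each vertex $s_k$ one has $R(s_k)=C(s_k)$ for $C$ the convex minorant, so by the strong Markov property the post-$s_k$ path is a $\mathrm{BES}^{R(s_k)}(3,1)$ whose own convex minorant (up to the remaining last-visit time) furnishes all later faces; hence the face-data sequence is Markov in $k$. Its one-step transition is obtained from the first- and last-passage laws of $\mathrm{BES}(3,1)$ between the consecutive heights $R(s_k)$ and $R(s_{k+1})$ — the ingredients packaged as $\sigma_t(x,u)$ and $h^{a,b}$ in Proposition~\ref{drift-fixed} — with an independent uniform entering as the relative position of a vertex inside its face, as in the sequential description of a minorant. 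Passing to the scale-free coordinates natural at $\tau_n$ (a point of $(0,\infty)^3\times(0,1)$, with $\rho_n/\sqrt{\tau_n}=\sqrt{Q_n}$ for $Q_n$ the third coordinate, and a genuinely fresh uniform as the fourth), the transition from vertex $n$ to vertex $n+1$ becomes: draw $U_n\sim\mathrm{Unif}(0,1)$ independent of the current state and apply the stated map. Checking that applying the map to a four-vector distributed as $\big(\chi_3^2\beta_{1,2}^2,\ \chi_3^2\beta_{1,2}(1-\beta_{1,2}),\ \chi_1^2,\ U\big)$ returns one with the same law identifies the latter as the stationary law; the map is one-to-one because the reverse update (recovering vertex $n$ together with $U_{n-1}$ from vertex $n$) is explicit, and its preserving that law is exactly the statement that the fourth output coordinate is again uniform and independent of the first three — precisely what propagates, inductively, the independence of the fresh uniform from the current state and so makes the three-dimensional chain genuinely Markov.

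\emph{The two consequences.} Because the map preserves the displayed law, every four-vector in the sequence has that law, so $\rho_n/\sqrt{\tau_n}$ and $\kappa_n/\sqrt{\tau_n}$, each a fixed function of the four-vector, have $n$-independent distributions, i.e.\ are stationary. For the Markov property of $(\rho_n/\sqrt{\tau_n})_{n\ge 0}$: its square is the third coordinate $Q_n$, which in the stationary law is independent of the first two coordinates (and of the uniform), so since the update $q\mapsto q'=r^2q/(t(t+q))$ uses the old value $q=Q_n$ together only with $(T_n,R_n)$ — whose conditional law given $Q_n$ does not involve $Q_n$ — and an independent uniform, the conditional law of $Q_{n+1}$ given the whole past is a function of $Q_n$ alone; hence $(Q_n)$, and with it $(\rho_n/\sqrt{\tau_n})_{n\ge 0}$, is a Markov chain. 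For $(\kappa_n/\sqrt{\tau_n})_{n\ge 0}$ one instead has $\kappa_n/\sqrt{\tau_n}=\rho_n/\sqrt{\tau_n}+(\Delta\kappa_n/\Delta\tau_n)\sqrt{\tau_n}$, which combines the third coordinate with the first two and so is not a coordinate of the chain; one then exhibits the failure of the Markov property by a direct computation, e.g.\ by showing that the conditional law of $\kappa_{n+1}/\sqrt{\tau_{n+1}}$ given $(\kappa_n/\sqrt{\tau_n},\kappa_{n-1}/\sqrt{\tau_{n-1}})$ genuinely depends on $\kappa_{n-1}/\sqrt{\tau_{n-1}}$.

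\emph{Main obstacle.} The crux is the middle step: getting the Williams reduction exactly right — the $\mathrm{Exp}(2)$ endpoint and the truncation at the last visit of $M$, which must be carried along in the state — choosing scale-free coordinates in which the picture reproduces itself from one vertex to the next, and then forcing the $\mathrm{BES}(3,1)$ first/last-passage computation into the precise algebraic form of the stated map and the precise law $\big(\chi_3^2\beta_{1,2}^2,\chi_3^2\beta_{1,2}(1-\beta_{1,2}),\chi_1^2,U\big)$. By comparison, the Markov and stationarity assertions are routine bookkeeping once the chain and its stationary law are in hand.
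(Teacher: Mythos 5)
There is a genuine gap at the heart of your argument. You justify the Markov property of the face sequence by asserting that, at a vertex $s_k$ of the convex minorant of the time-reversed process, ``by the strong Markov property the post-$s_k$ path is a $\mathrm{BES}^{R(s_k)}(3,1)$.'' But vertex times of a convex minorant are not stopping times: whether $s_k$ is a vertex is determined by the entire future of the path, and conditioning on this event forces the post-$s_k$ path to stay above the supporting line through $(s_k,R(s_k))$, so its law is \emph{not} that of an unconditioned Bessel process started at $R(s_k)$. This is exactly the difficulty that the Poisson and sequential descriptions of Groeneboom and Pitman--Ross are designed to circumvent, and it cannot be waved away. Compounding this, the step you yourself flag as the ``main obstacle'' --- identifying the one-step transition with the precise algebraic map of the statement and verifying that it preserves the law $(\chi_3^2\beta_{1,2}^2,\chi_3^2\beta_{1,2}(1-\beta_{1,2}),\chi_1^2,U)$ --- is the entire content of the first assertion of the theorem, and it is left unperformed.

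The paper's proof avoids both problems by assembling two ready-made structural results rather than re-deriving a transition kernel. First, Groeneboom's Poisson description (Theorem \ref{poisson}) restricted to the faces of $K$ with slope exceeding $1$ gives a stick-breaking representation $M_n=\prod_{i\le n}\hat U_i$ of the slopes and, conditionally on the slopes, i.i.d.\ $\chi_1^2$ durations; this yields the distributional identity $(\tilde\tau_0,\tilde\kappa_0,\tilde U,\tilde Q)\ed(\tau_0,\kappa_0,U,Q)$ of display \eqref{substitution}. Second, Williams' decomposition identifies the pre-$\sigma_1$ path, given its endpoint, as a first-passage bridge, so the Pitman--Ross $(\tau,\rho)$-recursion (Theorem \ref{ross-pitman}) applies verbatim and gives the step from vertex $n$ to vertex $n+1$ with an independent $\mathcal U([0,1])\otimes\chi_1^2$ innovation. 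The map in the statement is precisely the composition of these two identities, written as $(\tau_0,\rho_0,U_1,Q_1)=h(\tilde\tau_0,\tilde\rho_0,\tilde U,\tilde Q)\ed h(\tau_0,\rho_0,U,Q)$, whence law-preservation is immediate; stationarity and the (non-)Markov assertions then follow from the explicit recursions $\rho_{n+1}^2/\tau_{n+1}=Q_{n+1}+U_{n+1}^2\rho_n^2/\tau_n$ and $\kappa_1^2/\tau_1=\tilde\kappa_0^2/\tilde\tau_0$. If you want to salvage your route, you would need to replace the strong-Markov claim by one of these descriptions (or an equivalent sequential description of the minorant), at which point you are essentially reconstructing the paper's argument.
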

We finish this section by showing that a random scaling of the path of $B$ on $[0,\sigma_{\mu}]$ produces processes that are absolutely continuous with respect to the three-dimensional Bessel process. Let us introduce first some notation
\begin{notation}
Let us denote by $M$ the Mill's ratio function defined as 
$$M(x):=\frac{ \overline{\Phi}(x)} {\phi(x)}.$$ 
For a non-negative continuous function $f$ defined on $[0,1]$, define its minslope as 
\[
\mathfrak{M}(f)=\inf_{0<u \le 1} \frac{f(u)}{u}
\]
Moreover, let $\mathfrak{B}(f)$ be the last time $f$ reaches its minslope, i.e \[ \mathfrak{B}(f):=\sup \{ t \in (0,1] : \frac{f(t)}{t}=\mathfrak{M}(f) \}\]
\end{notation}
We have the following theorem
\begin{thm}\label{meander}
Let $\mu >0$, and define $B_{\mu}(t):=B(t)-\mu t$. Define the two pseudo-meanders
\[
(\tilde{B}^{me}(u) , 0 \le u \le 1):= \left(\frac{B_{-\mu}(\sigma_{\mu})-B_{-\mu}((1-u)\sigma_{\mu})}{\sqrt{\sigma_{\mu}}} ,0 \le u \le 1 \right)
\]
and
\[
(\hat{B}^{me}(u) , 0 \le u \le 1 ):= \left(\frac{B(\sigma_{\mu})-B((1-u)\sigma_{\mu})}{\sqrt{\sigma_{\mu}}} ,0 \le u \le 1 \right)
\]
Then both $\tilde{B}^{me}$ and $\hat{B}^{me}$ are both absolutely continuous with respect to $\text{BES}^{0}(3)$ with Radon-Nidokym derivative given respectively by $\displaystyle 2\frac{M(\tilde{B}^{me}(1))}{\tilde{B}^{me}(1)}$ and $\displaystyle 2\frac{\mathfrak{M}(\hat{B}^{me})}{\hat{B}^{me}(1)}$.
\end{thm}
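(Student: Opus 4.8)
The plan is to establish the absolute continuity of each pseudo-meander against $\text{BES}^0(3)$ by first identifying the law of the time-reversed, space-reversed path $u \mapsto B_{-\mu}(\sigma_\mu) - B_{-\mu}((1-u)\sigma_\mu)$ before the Brownian scaling by $\sqrt{\sigma_\mu}$, and then tracking how the scaling interacts with that law. The key external input is the classical description of $B_{-\mu}$ on $[0,\sigma_\mu]$: since $\sigma_\mu = \mathrm{argmax}_{t\ge0}\{B_{-\mu}(t)\}$ is the location of the overall maximum of the Brownian motion with negative drift $-\mu$ (equivalently the last passage time of $B$ with drift $\mu$ above the eventual running maximum minus a drift term), Williams-type path decomposition theorems tell us that the pre-$\sigma_\mu$ path, reversed from its maximum, behaves like a $\text{BES}^0(3,\mu)$ run until a last-exit time, and the post-$\sigma_\mu$ path is an independent $\text{BES}^0(3,-\mu)$-type piece. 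I would state precisely the decomposition of $(B_{-\mu}(t))_{0\le t\le\sigma_\mu}$ reversed: $\tilde R(s) := B_{-\mu}(\sigma_\mu) - B_{-\mu}(\sigma_\mu - s)$ for $0\le s\le\sigma_\mu$ is, conditionally on $\sigma_\mu$, a $\text{BES}^0(3)$-bridge-like object, or more usefully, $\tilde R$ is an (unconditioned) three-dimensional Bessel process with drift $\mu$ run up to the last time it hits the level $M_{-\mu}:=B_{-\mu}(\sigma_\mu) = \max_{t}B_{-\mu}(t)$.

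\medskip

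\noindent\textbf{Main steps.} First I would fix the reversal: write $X(s) = B_{-\mu}(\sigma_\mu) - B_{-\mu}(\sigma_\mu - s)$, so that $\tilde B^{me}(u) = X(u\sigma_\mu)/\sqrt{\sigma_\mu}$ and in particular $\tilde B^{me}(1) = M_{-\mu}/\sqrt{\sigma_\mu}$. Second, I would invoke the decomposition that $X$ is distributed as a $\text{BES}^0(3,\mu)$ process $\mathcal R$ stopped at $L := \sup\{s: \mathcal R(s) = M\}$ where $M = \mathcal R_\infty$-type maximum; equivalently, using the standard absolute continuity $\frac{d\mathrm{BES}^0(3,\mu)}{d\mathrm{BES}^0(3)}\big|_{\mathcal F_s} = \frac{\sinh(\mu \mathcal R(s))}{\mu \mathcal R(s)} e^{-\mu^2 s/2}$ and the further last-passage decomposition, the law of $(X(s), 0\le s\le L)$ relative to $\mathrm{BES}^0(3)$ run for a random time $L$ carries an explicit density factor. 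Third — the crucial computational step — I would apply the deterministic scaling $f \mapsto (f(u\ell)/\sqrt{\ell})_{0\le u\le1}$ with $\ell = L$, and use the Brownian/Bessel scaling invariance of $\text{BES}^0(3)$ together with the joint law of $(L, X(L)) = (\sigma_\mu, M_{-\mu})$ to integrate out $\ell$ and collapse everything to a functional on path space $[0,1]$. The residual factor, after this change of variables, should reduce exactly to $2 M(\tilde B^{me}(1))/\tilde B^{me}(1)$, where the Mill's ratio $M$ enters through the normalizing constant $\int_0^\infty f_{\mu,\cdot}$-type integral that turns the inverse-Gaussian/last-passage density of $\sigma_\mu$ into a density on the scaled endpoint. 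For $\hat B^{me}$, the same scheme applies but the path is reversed from the \emph{endpoint value} $B(\sigma_\mu)$ rather than the maximum of $B_{-\mu}$; here $\mathfrak M(\hat B^{me})$ appears because $\hat B^{me}(1) - \hat B^{me}(u) = (B((1-u)\sigma_\mu) - B(0))/\sqrt{\sigma_\mu}$ has a linear lower bound whose optimal slope encodes the drift $\mu$, i.e. $\mathfrak M$ is precisely the scaled version of $\mu$ seen through the minslope functional, and the Radon-Nikodym factor $2\mathfrak M(\hat B^{me})/\hat B^{me}(1)$ emerges from the same last-passage/Girsanov bookkeeping with the deterministic drift replaced by its path-functional proxy.

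\medskip

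\noindent\textbf{The hard part} will be step three: correctly matching the Jacobian of the random rescaling $f\mapsto f(L\,\cdot)/\sqrt L$ against the joint density of $(\sigma_\mu, M_{-\mu})$ (and its analogue $(\sigma_\mu, \mathfrak M)$) so that the $L$-dependence cancels and one is left with a clean path-space Radon-Nikodym derivative depending only on the terminal value and, in the hatted case, the minslope. In particular one must check that after scaling, the law of $\text{BES}^0(3,\mu)$-run-to-$L$ becomes $\text{BES}^0(3)$-on-$[0,1]$ tilted by exactly the claimed factor with constant $2$ — verifying the constant requires evaluating $\int_0^\infty$ of the relevant last-passage density and recognizing the result as $\overline\Phi/\phi$ at the terminal point; an error here would shift the constant. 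A secondary subtlety is justifying the conditioning/regular-conditional-probability manipulations (the decomposition of $B_{-\mu}$ on $[0,\sigma_\mu]$ is classical but must be cited carefully, e.g. from \cite{revuzyor} or the Williams decomposition literature) and confirming that the minslope functional $\mathfrak M(\hat B^{me})$ is almost surely attained at a unique point so that $\mathfrak B$ is well-defined and the reversal is a bijection on the relevant path space.
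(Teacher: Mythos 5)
Your route for $\tilde{B}^{\text{me}}$ is workable but heavier than the paper's. The paper conditions on $(\sigma_{\mu},B_{-\mu}(\sigma_{\mu}))=(T,y)$, uses Williams' decomposition to identify the pre-$\sigma_{\mu}$ path as a first passage bridge, invokes the Bertoin--Chaumont--Pitman identification of first passage bridges with three-dimensional Bessel bridges, and observes that after Brownian scaling the conditional law depends on $(T,y)$ only through the endpoint $y/\sqrt{T}$. Absolute continuity then collapses to the one-dimensional density ratio $4t\bar{\Phi}(t)/(2t^{2}\phi(t))=2M(t)/t$, using $\tilde{B}^{\text{me}}(1)\ed\chi_3(1-\beta_{1,2})$. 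Your unconditioned description (drifted Bessel process run to a last exit time, Girsanov tilt, random rescaling) can be pushed to the same endpoint, but the ``hard part'' you flag -- cancelling the $L$-dependence in the random rescaling -- is resolved exactly by passing to the bridge representation, so you would end up rederiving the paper's argument with extra machinery; also note the level whose last exit time you stop at is an independent exponential, not a functional of the Bessel path.

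For $\hat{B}^{\text{me}}$ there is a genuine gap. You assert that $\mathfrak{M}(\hat{B}^{\text{me}})$ ``is precisely the scaled version of $\mu$,'' i.e.\ essentially $\mu\sqrt{\sigma_{\mu}}$. This is false: since $\hat{B}^{\text{me}}(u)=\tilde{B}^{\text{me}}(u)+\mu\sqrt{\sigma_{\mu}}\,u$, one has $\mathfrak{M}(\hat{B}^{\text{me}})=\mu\sqrt{\sigma_{\mu}}+\mathfrak{M}(\tilde{B}^{\text{me}})$, and $\mathfrak{M}(\tilde{B}^{\text{me}})$ is a.s.\ strictly positive (it equals $\tilde{B}^{\text{me}}(1)$ times an independent uniform; equivalently $\mathfrak{M}(\hat{B}^{\text{me}})=\sqrt{\sigma_{\mu}}\,K'(\sigma_{\mu}-)>\mu\sqrt{\sigma_{\mu}}$). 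More seriously, the added drift $\mu\sqrt{\sigma_{\mu}}\,u$ has a \emph{random} slope that is not independent of the path nor recoverable from it in any obvious way, so ``the same Girsanov bookkeeping with the drift replaced by its path-functional proxy'' is not an argument. The ingredient that makes the theorem true is the paper's Lemma~\ref{drift:bridges}: for a three-dimensional Bessel bridge $R$, the law of $(R(u)+ku)_{0\le u\le1}$ given $\mathfrak{M}(R)=m$ equals the law of the corresponding bridge given minslope $m+k$; this is proved by a path decomposition at the time the minslope is attained (time inversion to a $\text{BES}^{r}(3)$ and Williams' decomposition at its ultimate minimum). Only with this lemma can one conclude that, conditionally on the pair $(\hat{B}^{\text{me}}(1),\mathfrak{M}(\hat{B}^{\text{me}}))$, the path is a $\text{BES}^{0}(3)$ conditioned on the same pair, whence the Radon--Nikodym derivative is the ratio of the two joint densities, namely $2v$ at $v=\mathfrak{M}(\hat{B}^{\text{me}})/\hat{B}^{\text{me}}(1)$ (comparing the $\beta_{2,1}$ law of that ratio with the uniform law of $\mathfrak{M}(Y)/Y(1)$ for $Y\ed\text{BES}^{0}(3)$). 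Your proposal contains no substitute for this step, and without it the claimed form of the density cannot be reached.
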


\section{Fixed time analysis}
We recall the main results of Groeneboom in his study of the concave majorant of Brownian motion. We define first the following process
\[
\tau_{a}:= \sigma_{\frac{1}{a}} = \text{argmax}^{+} \left \{ t \ge 0 : B(t) -\frac{t}{a}  \right\} , a > 0 .
\]
The process $\tau$ encodes all the information about the concave majorant $K$. The following result of Groeneboom gives a full description of the law of $\tau$. We have
\begin{thm}[\cite{gro83}, Theorem 2.1]\label{poisson}
The process $\tau$ is pure-jump with independent nonstationary increments. In particular, the Poisson point process of jumps
\[
\cH := \{(r,\Delta\tau_r) : \Delta \tau_r>0 \}
\]
has intensity measure absolutely continuous with respect to Lebesgue measure with density
\[
\rho(r,t)=\frac{1}{r^2 \sqrt{t}} \,\, \phi \left (\frac{\sqrt{t}}{r} \right)
\]
\end{thm}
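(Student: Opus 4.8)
The plan is to recognise $\tau=(\tau_a)_{a>0}$ --- which is manifestly nondecreasing with $\tau_{0+}=0$ and $\tau_{\infty}=\infty$ --- as an increasing additive process (a time‑inhomogeneous subordinator), and then to read off its Lévy kernel from an explicit Laplace transform. It suffices to establish (i) that $\tau$ has independent increments and is stochastically continuous, and (ii) the formula $\bE[e^{-\lambda\tau_a}]=2\bigl(1+\sqrt{1+2a^2\lambda}\,\bigr)^{-1}$. For then $-\log\bE[e^{-\lambda\tau_a}]=\int_0^a\psi_r(\lambda)\,dr$ with $\psi_r$ the Laplace exponent of an increasing additive process, and the Lévy--Itô decomposition identifies $\tau$ as a deterministic drift plus the sum of the points of a Poisson point process $\cH=\{(r,\Delta\tau_r)\}$ of intensity $\nu_r(dt)\,dr$, where $\psi_r(\lambda)=\delta_r\lambda+\int_0^\infty(1-e^{-\lambda t})\,\nu_r(dt)$; matching this $\psi_r$ against $\rho$ then finishes the proof.

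The engine for (i), and also for (ii), is the classical Williams path decomposition of Brownian motion at the maximum of $B_{-\mu}(t):=B(t)-\mu t$ (see e.g. \cite{revuzyor}): for $\mu>0$ the overall supremum $S^{(\mu)}:=\sup_{t\ge0}B_{-\mu}(t)$ is exponential with rate $2\mu$, and, conditionally on $S^{(\mu)}$, the pre‑maximum and post‑maximum parts of the path are independent, the post‑maximum process $R^{(\mu)}_s:=S^{(\mu)}-B_{-\mu}(\sigma_\mu+s)$ being a $\text{BES}^{0}(3,\mu)$ whose law does not depend on $S^{(\mu)}$; hence $R^{(\mu)}$ is independent of $\cF_{\sigma_\mu}:=\sigma(B_u:u\le\sigma_\mu)$. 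For the independence of increments one then needs two elementary path observations. First, for $b\le a$, writing $B_{-1/b}(t)=B_{-1/a}(t)+t(1/a-1/b)$ and using that $\tau_a=\sigma_{1/a}$ is the \emph{rightmost} maximiser of $B_{-1/a}$, one gets $B_{-1/b}(t)<B_{-1/b}(\tau_a)$ for all $t>\tau_a$; hence $\tau_b\le\tau_a$, and $\tau_b$ and $B_{\tau_b}$ are $\cF_{\tau_a}$‑measurable. Second, with $\mu=1/a$ and $a'\ge a$, the definition of $R^{(\mu)}$ gives $\tau_{a'}-\tau_a=\text{argmax}^{+}_{s\ge0}\bigl(s(1/a-1/a')-R^{(\mu)}_s\bigr)$ and $B_{\tau_{a'}}-B_{\tau_a}=(\tau_{a'}-\tau_a)/a-R^{(\mu)}_{\tau_{a'}-\tau_a}$, so the increment of $(\tau,B\circ\tau)$ over $[a,a']$ is a measurable functional of $R^{(\mu)}$. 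Combining, for $a_1<\cdots<a_n$ the increment over $[a_{j-1},a_j]$ is $\sigma(R^{(1/a_{j-1})})$‑measurable, hence independent of $\cF_{\tau_{a_{j-1}}}\supseteq\sigma(\tau_{a_1},\dots,\tau_{a_{j-1}})$, and a routine induction gives mutual independence of $\tau_{a_1},\tau_{a_2}-\tau_{a_1},\dots,\tau_{a_n}-\tau_{a_{n-1}}$. Stochastic continuity is immediate since the jump times of $\tau$ are the reciprocals of the slopes of $K$, whose law has no fixed atoms.

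For (ii), Williams' decomposition (the pre‑maximum part) says that, conditionally on $S^{(\mu)}=m$, the time $\sigma_\mu$ has the law of the first passage time $T_{\mu,m}$ of $B_\mu$ to level $m$, with $\bE[e^{-\lambda T_{\mu,m}}]=e^{-m(\sqrt{\mu^2+2\lambda}-\mu)}$; integrating over $m$ exponential with rate $2\mu$ gives $\bE[e^{-\lambda\sigma_\mu}]=2\mu\bigl(\mu+\sqrt{\mu^2+2\lambda}\,\bigr)^{-1}$, i.e. $\bE[e^{-\lambda\tau_a}]=2\bigl(1+\sqrt{1+2a^2\lambda}\,\bigr)^{-1}$ on setting $\mu=1/a$. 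Differentiating $-\log$ in $a$ yields $\psi_a(\lambda)=\tfrac1a\bigl(1-(1+2a^2\lambda)^{-1/2}\bigr)$. Finally, with $\rho(r,t)=\tfrac{1}{r^2\sqrt t}\phi(\sqrt t/r)=\tfrac{1}{r^2\sqrt{2\pi t}}e^{-t/(2r^2)}$ and the elementary integral $\int_0^\infty t^{-1/2}e^{-ct}\,dt=\sqrt{\pi/c}$, one checks $\int_0^\infty(1-e^{-\lambda t})\rho(r,t)\,dt=\tfrac1r-\tfrac{1}{r\sqrt{1+2r^2\lambda}}=\psi_r(\lambda)$. Since $\psi_r(\lambda)$ has no term linear in $\lambda$, the drift $\delta_r$ vanishes and $\nu_r(dt)=\rho(r,t)\,dt$, which is the assertion of the theorem.

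The point requiring care is step (i): one must use Williams' decomposition in the sharp form that $R^{(\mu)}$ is \emph{unconditionally} independent of the whole pre‑$\sigma_\mu$ $\sigma$‑field (not merely conditionally given $S^{(\mu)}$), and one must verify the measurability claim that for $b\le a$ the quantities $\tau_b,B_{\tau_b}$ are already determined by $B$ restricted to $[0,\tau_a]$. Together these let the independence of increments telescope from a single path decomposition applied at each level $\sigma_{1/a_j}$, rather than forcing an iterated path decomposition for the drifted Bessel processes that successively arise; once (i) is secured, (ii) and the matching computation are routine.
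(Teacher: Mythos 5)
This theorem is quoted in the paper from Groeneboom (Theorem 2.1 of \cite{gro83}) and is not proved here, so there is no internal proof to compare against; what you have written is a genuine, essentially self-contained alternative derivation, and I believe it is correct. Groeneboom's original argument (the flavour of which this paper reprises in its proof of Theorem \ref{density-zenith}) proceeds by direct estimates on the probability that Brownian bridges cross deterministic lines, extracting the joint density of $(\sigma_a,\sigma_b,B(\sigma_a),B(\sigma_b))$ by a limiting argument; your route instead establishes the independent-increments structure pathwise from Williams' decomposition, computes $\bE[e^{-\lambda\tau_a}]=2(1+\sqrt{1+2a^2\lambda})^{-1}$ by integrating the first-passage Laplace transform against the exponential law of $\sup_t B_{-\mu}(t)$, and then invokes the L\'evy--It\^o theory of increasing additive processes to read off the jump intensity from $\partial_a\bigl(-\log\bE[e^{-\lambda\tau_a}]\bigr)=\tfrac1a\bigl(1-(1+2a^2\lambda)^{-1/2}\bigr)$; the matching integral $\int_0^\infty(1-e^{-\lambda t})\rho(r,t)\,dt$ checks out, as does the vanishing of the drift and the identification of the telescoped increments as functionals of the post-maximum Bessel process. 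This buys a shorter and more conceptual proof at the price of importing the general additive-process machinery (uniqueness of the L\'evy system given all one-dimensional Laplace exponents). One small repair: your justification of stochastic continuity by appeal to the absence of fixed atoms in the law of the slopes of $K$ is close to circular, since the distribution of those slopes is part of what the theorem establishes; it is cleaner to note that by independence of increments $\bE[e^{-\lambda(\tau_{a'}-\tau_a)}]$ is the ratio of the two explicit Laplace transforms, which tends to $1$ as $a'\downarrow a$, giving $\tau_{a'}-\tau_a\to 0$ in probability directly.
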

The concave majorant $K$ of $B$ is constructed from $\tau$ as concatenation of increasing linear segements with slopes $1/r$ and durations $\Delta \tau_r$ for $(r,\Delta \tau_r) \in \cH$. The joint law of $K$ and $B$ is fully described in the following theorem which is also due to Groeneboom.

\begin{thm}[\cite{gro83}, Theorem 2.2]\label{excursion-decomp}
The standard linear Brownian motion $B$ can be decomposed into the process $\tau$ and independent Brownian excursions. More precisely, conditionally on the process $\tau$, the vertical distance of the Brownian motion to the concave majorant $(K(t)-B(t))_{t \ge 0}$ is a succession of independent Brownian excursions, i.e for any measurable enumeration $(T_i)_{i \in \bZ}$ of jump times of $\tau$, depending only on the process $\tau$, the process $(K(t)-B(t))_{T_i \le t \le T_{i+1}}$ has the same distribution as $(\sqrt{T_{i+1}-T_{i}} \textbf{e} \left(\frac{t-T_{i}}{T_{i+1}-T_i} \right))_{T_i \le t \le T_{i+1}}$ where $\textbf{e}$ is a standard Brownian excursion on $[0,1]$, and these excursions are independent as $i$ varies.
\end{thm}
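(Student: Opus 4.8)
The plan is to deduce the excursion decomposition from the Poisson description of the vertices (Theorem~\ref{poisson}) together with the classical combinatorics of concave majorants of random walks and a limiting argument. First I would record what must be proved: conditionally on $\tau$ the jump times $(T_i)_{i\in\bZ}$ and the heights $B(T_i)=K(T_i)$ are already determined, the height at $T_i$ being the sum of $(\text{slope})\times(\text{length})$ over the faces lying to its left. Hence on each face $[T_i,T_{i+1}]$ the path $B$ runs from the known value $K(T_i)$ to the known value $K(T_{i+1})$, stays strictly below the chord joining them on the open interval, and $K-B$ there is a non-negative path vanishing exactly at the two endpoints. What remains is to identify the conditional law of this path and its independence over $i$, the assertion being that, after subtracting the chord, it equals $\sqrt{T_{i+1}-T_i}\,\mathbf{e}\!\left(\frac{\cdot-T_i}{T_{i+1}-T_i}\right)$ with $\mathbf{e}$ a standard Brownian excursion, independently across faces.

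Next I would introduce the discrete approximation. Fix a window $[\e,T]$ with $0<\e<T<\infty$ and let $B^{(n)}$ be the polygonal interpolation of $B$ through the dyadic times $k2^{-n}$; equivalently regard its increments as a walk with exchangeable (here Gaussian) steps. The concave majorant $K^{(n)}$ of $B^{(n)}$ on the grid has a classical description conditionally on the multiset of $(\text{slope},\text{length})$ pairs of its faces (a cycle-lemma / Sparre~Andersen argument; see \cite{pitmanross} and \cite{abrampitm}): concavity forces the faces to be listed in decreasing order of slope, conditionally on the face data the increment configuration inside any one face is uniform among the configurations producing that face — i.e.\ inside each face the path is a discrete bridge conditioned to stay weakly below its chord — and distinct faces are conditionally independent. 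As $n\to\infty$ the face data of $B^{(n)}$ converges to the Poisson point process of Theorem~\ref{poisson}, so in the limit the conditioning is exactly conditioning on $\tau$; and Donsker's theorem together with the weak convergence of conditioned random-walk bridges to the normalized Brownian excursion (Kaigh; Durrett--Iglehart--Miller) identifies the limit of each within-face piece as a Brownian bridge conditioned to stay below its chord, which after subtracting the chord is a scaled Brownian excursion. Conditional independence over faces passes to the limit.

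The main obstacle is precisely this passage to the limit, and it has two aspects. The first is that the limiting within-face law — a Brownian bridge conditioned to stay below its chord — is defined by conditioning on a null event, so one cannot simply push the discrete conditional descriptions through a continuous-mapping argument; one needs the genuine weak-convergence results for conditioned bridges, made uniform in the face length over the finitely many faces meeting $[\e,T]$, in order to obtain convergence of the \emph{joint} conditional law rather than of a single marginal. The second is that one must couple the discrete face structure to the continuous one, and the enabling facts are the a.s.\ uniqueness of the concave majorant of $B$ and the statement in Theorem~\ref{poisson} that the faces accumulate only at $0$ and $\infty$: on $[\e,T]$ there are then finitely many faces, the vertices of $K^{(n)}$ converge to those of $K$, and the conditional description stabilizes. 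Running the argument on $[\e,T]$ gives the stated decomposition for the faces meeting that window; letting $\e\downarrow0$ and $T\uparrow\infty$ and using the consistency of the descriptions over nested windows yields the theorem on all of $[0,\infty)$.

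As a cross-check one can also peel off the right-hand portion by a path decomposition: for fixed $\mu>0$, a version of Williams' decomposition of $B-\mu t$ at its global maximum $\sigma_\mu$ shows that $\bigl((B_{\sigma_\mu}-\mu\sigma_\mu)-(B_{\sigma_\mu+u}-\mu(\sigma_\mu+u))\bigr)_{u\ge0}$ is a $\mathrm{BES}^0(3,\mu)$, so that $B$ on $[\sigma_\mu,\infty)$, recentred, is $u\mapsto\mu u$ minus this Bessel-with-drift process and the gap $K-B$ on $[\sigma_\mu,\infty)$ is the difference between that process and its convex minorant; by Brownian scaling this reduces the excursion decomposition beyond $\sigma_\mu$ to the analogous statement for $\mathrm{BES}(3,\mu)$. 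I would nevertheless keep the discrete approximation as the primary argument, since it is self-contained given Theorem~\ref{poisson} and does not risk circularity with the later Bessel-process propositions.
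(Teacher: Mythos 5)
Your proposal addresses a statement that the paper does not prove at all: Theorem~\ref{excursion-decomp} is quoted verbatim from Groeneboom \cite{gro83} and used as an input, so any comparison is really with Groeneboom's original argument, which proceeds by direct computation --- Markov-chain descriptions of the vertex process and explicit estimates for the probability that a Brownian bridge crosses a deterministic line (the same technique the present paper reuses in its proof of Theorem~\ref{density-zenith}). Your route is genuinely different and is essentially the random-walk approach of \cite{pitmanross,abrampitm,bravpitm}: the cycle-lemma description of the faces of the concave majorant of an exchangeable-increment walk, followed by a Donsker-type limit. The skeleton is sound, and you are right to identify the two real pressure points (the null-event conditioning defining the within-face law, and the convergence of the discrete face structure to the continuous one). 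One remark that would materially simplify your hardest step: by the cycle lemma, the discrete within-face path conditioned to stay weakly below its chord is exactly the Vervaat (cyclic-shift-to-the-argmax) transform of the unconditioned discrete bridge, and the Vervaat transform is a.s.\ continuous at the Brownian bridge (whose minimum is a.s.\ unique); Vervaat's theorem then identifies the limit as the normalized excursion by continuous mapping, with no conditioning on null events and no need for the Kaigh or Durrett--Iglehart--Miller conditioned-bridge limit theorems. What each approach buys: Groeneboom's computation yields the quantitative densities (of the type exploited in Proposition~\ref{fixed-time} and Theorem~\ref{density-zenith}) as a by-product, while your approach is softer, makes the conditional independence across faces structurally transparent, and generalizes to L\'evy processes as in \cite{bravpitm}. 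Your caution about circularity in the Williams-decomposition cross-check is well placed, since Proposition~\ref{bessel-fixed} in the paper is itself deduced from the theorem you are proving.
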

Combining these two results we can give a quick proof of Proposition \ref{fixed-time}

\begin{proof}[Proof of Proposition \ref{fixed-time}]
Let $S(1):=\frac{1}{K'(1)}$ denote the passage time across the level $1$ for the process $\tau$. Fix $0<u<1<u+x$ and $s,z >0$, then we have
\begin{align*}
\bP[S(1) \in ds, G_1 \in du, D_1-G_1 \in dx, B(G_1) \in dz]&=\\
\bP[B(\tau_{s-}) \in dz, \tau_{s-} \in du, \cH \cap (s,s+ds) \times (x,x+dx) \ne \emptyset]+o(dsdudxdz)&=\\
\bP[B(\tau_{s}) \in dz, \tau_s \in du]\rho(s,x)ds dx+o(dsdudxdz)
\end{align*}
as $\bP[\tau_{s-}=\tau_s]=1$ and $(B(\tau_{s-}),\tau_{s-})$ is measurable with respect to $\cH \cap (0,s)\times \bR$. The joint density of $(B(\tau_{s}),\tau_{s})=(B(\sigma_{\frac{1}{s}}),\sigma_{\frac{1}{s}})$ is known from Williams path decomposition (see \cite{wil}). Hence, using the fact that $K(1)-B(G_1)=K'(1)(1-G_1)$, we have that $I(1):=K(1)-K'(1)=B(G_1)-K'(1)G_1$. Hence, by a standard change of variables we get the joint density of $(K'(1),I(1),\frac{1}{G_1},D_1)$. Finally, using Theorem \ref{excursion-decomp} we get that
\[
(K(1)-B(1) | G_1=u, D_1=v) \ed \sqrt{v-u} ~ \textbf{e} \left(\frac{1-u}{v-u} \right) \ed \sqrt{\frac{(1-u)(v-1)}{v-u}} \chi_3
\]
Putting all this together gives the five-dimensional density. The sub-marginals are easily obtained by integrating out the remaining variables.
\end{proof}
\begin{remark} 
By invariance of the law of $(B,K)$ under time-inversion, we know that 
$$(I(1),K'(1),K(1)-B(1),\frac{1}{G_1},D_1) \ed (K'(1),I_1,K(1)-B(1),D_1,\frac{1}{G_1}),$$
 which can easily be checked from the joint density. 
\end{remark}
Recall that a three-dimensional Bessel process with drift $\mu>0$ is the law of the Brownian motion with drift $\mu$ conditioned to stay positive. By Williams path decomposition, it is also the law of the post-$\sigma_{\mu}$ process $(\mu t -B(\sigma_{\mu}+t)+B(\sigma_{\mu}))_{t \ge 0}$ (see \cite{wil}).  Combining this observation with Theorem \ref{poisson} gives the 
following Poisson description of the convex minorant of a three-dimensional Bessel process with drift.
\begin{prop}\label{bessel-fixed}
Let $(R(t))_{t \ge 0}$ be a three-dimensional Bessel process with drift $\mu>0$. The convex minorant $C$ of $R$ is a concatenation of increasing segments with slopes less than $\mu$. The set of pairs of slopes and time spacings of $C$ is a Poisson point process $\cG$ with intensity measure of density with respect to the Lebesgue measure given by
\[
g(\alpha,t)=\frac{\phi \left(\sqrt{t}(\mu-\alpha) \right)}{\sqrt{t}} \ind_{\{0 < \alpha < \mu\}}
\]
Moreover, conditionally on $C$, the process $R-C$ is again a succession of independent Brownian excursions between the vertices of $C$.
\end{prop}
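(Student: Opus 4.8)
The plan is to deduce Proposition~\ref{bessel-fixed} by combining Williams' path decomposition of Brownian motion with drift with the Poisson description of the concave majorant in Theorem~\ref{poisson} and Theorem~\ref{excursion-decomp}. The starting point is the identity, recalled just before the statement, that the post-$\sigma_\mu$ process $(\mu t - B(\sigma_\mu + t) + B(\sigma_\mu))_{t\ge 0}$ has the law of $\text{BES}^0(3,\mu)$. Under this identification, the convex minorant $C$ of $R$ corresponds to the reflection about the line of slope $\mu$ through $(\sigma_\mu, B(\sigma_\mu))$ of the concave majorant $K$ of $B$, restricted to the faces of $K$ lying to the right of $\sigma_\mu$; a face of $K$ with slope $1/r < \mu$ and duration $\Delta\tau_r$ becomes a face of $C$ with slope $\mu - 1/r$ and the same duration $\Delta\tau_r$.

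First I would make precise which faces of $K$ survive. In the parametrization of Theorem~\ref{poisson}, $\sigma_\mu = \tau_{1/\mu}$, so the faces of $K$ strictly to the right of $\sigma_\mu$ are exactly those indexed by $r > 1/\mu$, i.e.\ those with slope $1/r < \mu$. The Poisson point process $\cH = \{(r,\Delta\tau_r)\}$ restricted to $r > 1/\mu$ is, by the restriction property of Poisson processes and independence of disjoint regions, again a Poisson point process, and it is independent of the pre-$\sigma_\mu$ part (this independence is exactly the content of Williams' decomposition at the level of the point process $\tau$, since $\tau$ has independent increments). Then I would push this point process forward under the slope map $\beta: (r,t) \mapsto (\mu - 1/r, t)$, so that slope $\alpha = \mu - 1/r$, hence $r = 1/(\mu-\alpha)$ and $dr = d\alpha/(\mu-\alpha)^2$. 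Transforming the intensity $\rho(r,t)\,dr\,dt = \frac{1}{r^2\sqrt t}\phi(\sqrt t / r)\,dr\,dt$ under this change of variables gives
\[
\frac{1}{r^2\sqrt t}\,\phi\!\left(\frac{\sqrt t}{r}\right) r^2 \, d\alpha \, dt = \frac{1}{\sqrt t}\,\phi\!\left(\sqrt t\,(\mu-\alpha)\right) d\alpha\, dt,
\]
which is exactly $g(\alpha,t)\,d\alpha\,dt$ with the indicator $\ind_{\{0<\alpha<\mu\}}$ coming from $r \in (1/\mu, \infty)$ together with $r>0$. This establishes that $\cG$ is a Poisson point process with the claimed intensity, and that $C$ is the concatenation of increasing segments with slopes $\alpha < \mu$ and the corresponding durations.

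Finally, for the excursion statement I would invoke Theorem~\ref{excursion-decomp}: conditionally on $\tau$, the process $(K(t)-B(t))_{t\ge0}$ is a concatenation of independent scaled Brownian excursions between consecutive vertices of $K$. Reflecting about the line of slope $\mu$ through $(\sigma_\mu,B(\sigma_\mu))$ turns $K(t)-B(t)$ into $R(s)-C(s)$ on the faces to the right of $\sigma_\mu$ (with $s = t-\sigma_\mu$), and this reflection is an isometry in the vertical coordinate, hence preserves the excursion description: conditionally on $C$, the process $R-C$ is a succession of independent Brownian excursions between the vertices of $C$, each scaled by the square root of the length of the corresponding face.

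The main obstacle I anticipate is the measure-theoretic bookkeeping around Williams' decomposition: one must be careful that the Poisson point process of faces of $K$ to the right of $\sigma_\mu$, together with the associated excursions, is genuinely independent of the pre-$\sigma_\mu$ data and has exactly the unconditioned law obtained by restricting $\cH$, rather than some law conditioned on $\sigma_\mu$. This follows from the independent-increments property of $\tau$ in Theorem~\ref{poisson} combined with the fact that $\sigma_\mu$ is a deterministic function of the restriction of $\cH$ to $\{r \le 1/\mu\}$ (it equals $\sup\{\tau_r : r \le 1/\mu\}$), so conditioning on the pre-$\sigma_\mu$ part leaves the post-$\sigma_\mu$ point process with its original unconditioned intensity on $\{r > 1/\mu\}$; everything else is a routine change of variables.
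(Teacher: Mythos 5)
Your proposal is correct and is exactly the argument the paper intends: Williams' decomposition identifies $R$ with $(\mu t - B(\sigma_\mu+t)+B(\sigma_\mu))$, the faces of $K$ with slope $1/r<\mu$ become the faces of $C$ via $\alpha=\mu-1/r$ (with the Jacobian $dr=r^2\,d\alpha$ cancelling the $1/r^2$ in $\rho$), and the excursion statement transfers because $R-C$ equals $K-B$ shifted by $\sigma_\mu$. The paper states this only as a one-sentence remark before the proposition (the displayed proof environment labelled ``Proof of Proposition \ref{bessel-fixed}'' in fact proves Proposition \ref{drift-fixed}), so your write-up simply supplies the details, including the correct handling of the independence of the restriction of $\cH$ to $\{r>1/\mu\}$ from the pre-$\sigma_\mu$ data.
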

Before establishing the analogue of Theorem \ref{fixed-time} for the three-dimensional Bessel process with drift, we will give an explicit formula for the density of the increments of the {\em zenith process} $(\sigma_{\mu},B_{\mu}(\sigma_{\mu}))_{\mu>0}$ studied by \c{C}inlar \cite{cin92}. While the Laplace transform of the increment $(\sigma_{b}-\sigma_{a},B(\sigma_{b})-B(\sigma_{a}))$ for $a>b$ is straightforward from Williams path decomposition, inverting it is not obvious, as remarked by \c{C}inlar \cite{cin92}, 
who gave a formula via convolutions of measures. We have the following theorem whose proof is inspired by some of Groeneboom's computations.
\begin{thm}\label{density-zenith}
Fix $0<b<a$, and define the function $h^{a,b}$ on $(0,\i)^2$ by 
\[
h^{a,b}(s,z)=\frac{2b}{a} \bE \left[ \left(\sqrt{\left(a-\frac{z}{s}\right)\left(\frac{z}{s}-b\right)}-\frac{V}{\sqrt{s}}\right)_{+}^2\right]\frac{\phi(\frac{z}{\sqrt{s}})}{\sqrt{s}} \ind_{\{ bs \le z \le as\}}
\]
where $V$ is standard normal.
Then the random variable $(\sigma_{b}-\sigma_{a},B(\sigma_b)-B(\sigma_a))$ has density $h_{a,b}$ on $(0,\infty)^2$ and has a point mass in $(0,0)$ of probability $\frac{b}{a}$.
\end{thm}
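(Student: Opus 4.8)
The plan is to realise the increment as a bivariate functional of a Poisson point process, read off its joint Laplace transform in closed form, and then invert that transform. First I would reduce to a Poisson sum. Since $a>b$, writing $B(t)-bt=(B(t)-at)+(a-b)t$ and noting that $\sigma_a$ maximises the first summand while the second is increasing shows a.s. $\sigma_b\ge\sigma_a$, with equality precisely when no vertex of $K$ has slope in $(b,a)$; on that event also $B(\sigma_b)=B(\sigma_a)$, which will give the atom. On the complement, by Theorem~\ref{poisson} (recall $\sigma_\mu=\tau_{1/\mu}$) the segments of $K$ lying between $\sigma_a$ and $\sigma_b$ are exactly the jumps of $\tau$ with rate parameter $r\in(1/a,1/b)$, a range on which $\tau$ is an independent Poisson point process, and $\sigma_b-\sigma_a$ and $B(\sigma_b)-B(\sigma_a)$ are the corresponding sums of durations and of rises $(\text{slope})\cdot(\text{duration})$. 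After the change of variable $\beta=1/r$ one gets
\[
(\sigma_b-\sigma_a,\ B(\sigma_b)-B(\sigma_a))\ \ed\ \Big(\sum_i t_i,\ \sum_i\beta_i t_i\Big),
\]
where $\{(\beta_i,t_i)\}$ is a Poisson point process on $(b,a)\times(0,\infty)$ with intensity $\dfrac{\phi(\beta\sqrt t)}{\sqrt t}\,\ind_{\{b<\beta<a\}}\,d\beta\,dt$, of finite total mass $\log(a/b)$. (Alternatively, by Williams' decomposition the post-$\sigma_a$ process is $\text{BES}^{0}(3,a)$, and by Proposition~\ref{bessel-fixed} the increment is the total length and rise of the segments of its convex minorant of slope $<a-b$, which leads to the same Poisson point process.)

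Next I would compute the joint Laplace transform. By the exponential formula, for $\lambda,\theta\ge0$,
\[
\bE\!\left[e^{-\lambda(\sigma_b-\sigma_a)-\theta(B(\sigma_b)-B(\sigma_a))}\right]=\exp\!\left(-\int_b^a\!\!\int_0^\infty\!\big(1-e^{-(\lambda+\theta\beta)t}\big)\,\frac{\phi(\beta\sqrt t)}{\sqrt t}\,dt\,d\beta\right).
\]
The substitution $t=u^2$ turns the inner integral into elementary Gaussian integrals, equal to $\beta^{-1}-(\beta^2+2\theta\beta+2\lambda)^{-1/2}$; the remaining $\beta$-integral is elementary, and one obtains
\[
\bE\!\left[e^{-\lambda(\sigma_b-\sigma_a)-\theta(B(\sigma_b)-B(\sigma_a))}\right]=\frac ba\cdot\frac{a+\theta+\sqrt{a^2+2a\theta+2\lambda}}{b+\theta+\sqrt{b^2+2b\theta+2\lambda}}=:\psi(\lambda,\theta).
\]
Letting $\lambda\to\infty$ with $\theta$ fixed gives $\psi\to b/a$; since $0\le B(\sigma_b)-B(\sigma_a)\le a(\sigma_b-\sigma_a)$, the law of the pair has an atom only at $(0,0)$, of mass $b/a$, which is the point-mass claim.

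The remaining, and main, task is to invert $\psi$, i.e.\ to show $\psi(\lambda,\theta)-b/a=\int_0^\infty\int_{bs}^{as}e^{-\lambda s-\theta z}h^{a,b}(s,z)\,dz\,ds$. Here I would write
\[
\psi(\lambda,\theta)-\frac ba=\frac ba\cdot\frac{(a-b)+\big(\sqrt{a^2+2a\theta+2\lambda}-\sqrt{b^2+2b\theta+2\lambda}\big)}{b+\theta+\sqrt{b^2+2b\theta+2\lambda}}
\]
and expand the right side into pieces that invert explicitly: the factor $\big(b+\theta+\sqrt{b^2+2b\theta+2\lambda}\big)^{-1}$ is, up to a simple rescaling, the joint Laplace transform in (time, driftless position) of the first passage of a Brownian motion with drift across a level — an inverse-Gaussian-type object, as in the densities $f_{\mu,y}$, $f^{*}_{\mu,y}$ — while the numerator contributes a differentiation in a slope parameter. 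Recombining the pieces and invoking the Gaussian identity $\bE\big[(c-V/\sqrt s)_+^2\big]=\int_{-\infty}^{c}(c-v/\sqrt s)^2\phi(v)\,dv$ with $V\ed\cN(0,1)$ reproduces exactly
\[
h^{a,b}(s,z)=\frac{2b}{a}\,\bE\!\left[\Big(\sqrt{(a-z/s)(z/s-b)}-\frac{V}{\sqrt s}\Big)_+^2\right]\frac{\phi(z/\sqrt s)}{\sqrt s}\,\ind_{\{bs<z<as\}}.
\]
Equivalently, and perhaps more efficiently, one may take this $h^{a,b}$ as an ansatz, substitute $z=\beta s$, perform the (Gaussian) $s$-integral and then the $\beta$-integral, and verify that the outcome is $\psi(\lambda,\theta)-b/a$; uniqueness of Laplace transforms of finite measures on $[0,\infty)^2$ then finishes the proof and also rules out a singular part.

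The substantive obstacle is precisely this inversion. The Laplace transform follows immediately from the Poisson picture (as \c{C}inlar \cite{cin92} knew), but producing the clean closed form $h^{a,b}$, rather than an expression assembled from convolutions of measures, is where the Groeneboom-style Gaussian-integral computations are needed — the Gaussian distribution function concealed inside $\bE[(c-V/\sqrt s)_+^2]$ being the delicate point, and the bookkeeping of which pieces carry atoms (on the rays $z=as$, $z=bs$) versus densities requiring care.
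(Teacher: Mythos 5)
Your strategy is genuinely different from the paper's, and the parts you actually carry out are correct, but the central step --- identifying the density as $h^{a,b}$ --- is left undone. The paper works directly: it estimates $\bP[\sigma_a\in(t_1,t_1+h),\,\sigma_b\in(t_2,t_2+h)]$ by conditioning on the path at $t_1$, $t_2$ and at the intersection point $t_0$ of the two critical lines, applies the explicit Brownian-bridge line-crossing probability (Lemma \ref{bridge-line}) on four subintervals, Taylor-expands in $h$, and integrates out the Gaussian value of $B(t_0)$; the expectation $\bE[(\sqrt{(a-z/s)(z/s-b)}-V/\sqrt{s})_+^2]$ appears constructively from that last Gaussian integration. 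You instead write the increment as $(\sum_i t_i,\sum_i\beta_i t_i)$ over the Poisson point process of faces with slopes in $(b,a)$ and compute the bivariate Laplace transform by the exponential formula. Everything you do explicitly does check out: the intensity $\phi(\beta\sqrt{t})t^{-1/2}\ind_{\{b<\beta<a\}}\,d\beta\,dt$ is the correct image of Groeneboom's $\rho$ under $\beta=1/r$, the inner integral is indeed $\beta^{-1}-(\beta^{2}+2\theta\beta+2\lambda)^{-1/2}$, the closed form $\psi(\lambda,\theta)$ is right (it equals $1$ at $\lambda=\theta=0$ and tends to $b/a$ as $\lambda\to\infty$), and your derivation of the atom of mass $b/a$ at the origin coincides with the paper's (no jumps of $\tau$ in $(1/a,1/b)$, a Poisson count with mean $\log(a/b)$).

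The gap is exactly the step you yourself flag as ``the substantive obstacle'': nothing in the proposal establishes that the absolutely continuous part of the law has density $h^{a,b}$, and that formula is the content of the theorem. Your first inversion sketch (``expand into pieces that invert explicitly \dots the numerator contributes a differentiation in a slope parameter \dots recombining the pieces reproduces exactly $h^{a,b}$'') is not an argument; the recombination is precisely what has to be proved, and as described it cannot be checked. Your fallback --- treat $h^{a,b}$ as an ansatz, substitute $z=\beta s$, integrate out $s$ and then $\beta$, and match against $\psi(\lambda,\theta)-b/a$, after which uniqueness of Laplace transforms of finite measures on $[0,\infty)^2$ rules out any further singular part --- is a legitimate and in-principle complete route, and the integrals are elementary Gaussian ones (one can use $\bE[(c-V/\sqrt{s})_+^2]=(c^{2}+s^{-1})(1-\bar\Phi(c\sqrt{s}))+c\,s^{-1/2}\phi(c\sqrt{s})$ together with $c^{2}+\beta^{2}=(a+b)\beta-ab$ for $c=\sqrt{(a-\beta)(\beta-b)}$, which makes the $s$-integrals of inverse-Gaussian type); but this is a genuinely nontrivial computation that you assert rather than perform. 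As written, the proposal proves the Laplace transform identity and the atom, not the density formula.
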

Before proving Theorem \ref{density-zenith}, we recall the following lemma which gives an estimate on the probability that a Brownian bridge crosses a deterministic line. We quote it from \cite{hajeksidak}[Page 183]
\begin{lem}\label{bridge-line}
Let $B$ a standard linear Brownian motion, and fix $0<s<t$. Moreover, let $a,b,x,y \in \bR$. Then
\begin{align*}
\bP[ \exists u \in (s,t) : B(u)>au+b | B(s)=x,B(t)=y]=\\
\exp \left(-2\frac{(as+b-x)_{+}(at+b-y)_{+}}{t-s}\right)
\end{align*}
\end{lem}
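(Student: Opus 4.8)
The plan is to reduce this classical reflection-principle identity to the reflection principle for unconditioned Brownian motion, after which it becomes a one-line Gaussian computation together with an elementary algebraic simplification. First I would strip off the linear barrier: set $\hat{B}(u) := B(u) - au$, $\ell := t - s$, $\hat{x} := x - as$ and $\hat{y} := y - at$. Path continuity shows that the event $\{\exists u \in (s,t) : B(u) > au + b\}$ coincides with $\{\sup_{s \le u \le t} \hat{B}(u) > b\}$ (if the supremum on $[s,t]$ exceeds $b$ but is attained only at an endpoint, continuity forces a strict exceedance at nearby interior times). Conditioning on $\{B(s)=x, B(t)=y\}$ is the same as conditioning on $\{\hat{B}(s)=\hat{x}, \hat{B}(t)=\hat{y}\}$, and since the law of $(B(u))_{s\le u\le t}$ given its endpoints is that of a Brownian bridge, subtracting the deterministic map $u \mapsto au$ turns it into a Brownian bridge $W$ from $\hat{x}$ to $\hat{y}$ over an interval of length $\ell$. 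So the problem reduces to evaluating $\bP[\sup_{[s,t]} W > b]$ for such a bridge.

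For the principal case $\hat{x} < b$ and $\hat{y} < b$, I would apply the reflection principle to the unconditioned motion $\hat{B}$ started at $\hat{x}$, reflecting the path at its first hitting time of level $b$: with $p_{\ell}(z) := (2\pi \ell)^{-1/2} e^{-z^2/(2\ell)}$ this gives
\[
\bP\Big[\sup_{s \le u \le t}\hat{B}(u) \ge b,\ \hat{B}(t) \in d\hat{y} \,\Big|\, \hat{B}(s) = \hat{x}\Big] = p_{\ell}(2b - \hat{x} - \hat{y})\, d\hat{y}.
\]
Dividing by the Gaussian normalization $p_{\ell}(\hat{y} - \hat{x})$ of the bridge's endpoint, and using $(2b - \hat{x} - \hat{y})^2 - (\hat{y} - \hat{x})^2 = 4(b - \hat{x})(b - \hat{y})$, yields
\[
\bP\Big[\sup_{s \le u \le t}\hat{B}(u) \ge b \,\Big|\, \hat{B}(s) = \hat{x},\ \hat{B}(t) = \hat{y}\Big] = \exp\!\left(-\frac{2(b - \hat{x})(b - \hat{y})}{\ell}\right).
\]
Since the supremum of the bridge has a continuous distribution on $[\max(\hat{x},\hat{y}),\infty)$ this equals $\bP[\sup_{[s,t]}W > b]$; and substituting $b - \hat{x} = as + b - x$ and $b - \hat{y} = at + b - y$ (both positive in this case, hence equal to their positive parts) gives exactly the asserted formula.

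Finally I would dispose of the degenerate cases. If $\hat{x} \ge b$, then $\hat{B}(s) \ge b$ and a Brownian bridge issued from a point at or above level $b$ a.s.\ exceeds $b$ on every right-neighbourhood of its starting time, so the event has probability $1$; the case $\hat{y} \ge b$ is symmetric under time reversal of the bridge. In each of these cases at least one of $(as+b-x)_+$ and $(at+b-y)_+$ vanishes, so the right-hand side equals $1$ as well, which closes the argument. I do not anticipate any real obstacle here: the only care points are the open-versus-closed interval comparison, the boundary cases $\hat{x}=b$ or $\hat{y}=b$, and the remark that conditioning on both endpoints annihilates the drift $-a$ of $\hat{B}$ — which I sidestep by subtracting the linear function only \emph{after} conditioning.
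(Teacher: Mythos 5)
Your proof is correct. Note, though, that the paper does not prove this lemma at all: it is stated as a classical fact and quoted directly from H\'ajek and \v{S}id\'ak's \emph{Theory of Rank Tests} (p.~183), so there is no argument in the paper to compare against. Your reduction --- subtracting the line to flatten the barrier, invoking the reflection principle for the joint law of the running maximum and the endpoint, dividing by the Gaussian endpoint density, and simplifying via $(2b-\hat{x}-\hat{y})^2-(\hat{y}-\hat{x})^2=4(b-\hat{x})(b-\hat{y})$ --- is the standard derivation of this crossing probability, and your treatment of the edge cases (open versus closed interval, atomless law of the bridge supremum above $\max(\hat{x},\hat{y})$, and the degenerate cases $\hat{x}\ge b$ or $\hat{y}\ge b$ where both sides equal $1$) is careful and complete. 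In effect you have supplied a self-contained proof of a result the authors chose to cite; nothing in your argument conflicts with how the lemma is used later in the paper.
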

\begin{proof}[Proof of Theorem \ref{density-zenith}]
Fix $0<t_1<t_2$ and let $h>0$, we will investigate the probability of the event 
\[
A:=\{ \sigma_a \in (t_1,t_1+h), \sigma_b \in (t_2,t_2+h) \}
\]
Let us denote $\mu_1:=a$ and $\mu_2:=b$, the event $A$ can be rewritten as 
\[
A=\{ (\forall s \ge 0) ~~ B(s) - \mu_{i}s < \sup_{t_i \le u \le t_i+h} (B(u)-\mu_{i} u) \text{ for } i=1,2\}
\]
Define $\overline{B}_{t_i,h}=\sup_{t_i \le u \le t_{i}+h} (B(u) -B(t_i))$. We will condition on the value of the random variables $\{B(t_i), B(t_i+h), \overline{B}_{t_i,h}, i=1,2 \}$. In particular, we wish to compute the following regular conditional probability
\[
\bP[A | B(t_i)=x_i, B(t_i+h)-B(t_i)=Y_{i}(h), \overline{B}_{t_i,h}=M_{i}(h)]
\]
where $x_i,Y_i(h),M_{i}(h)$ are fixed real numbers. From the definition of $\sigma_a$ and $\sigma_b$, this probability is equal to zero unless we have that $x_1+b(t_2-t_1)<x_2<x_1+a(t_2-t_1)$. By introducing the point $t_0$ that is the location of the intersection point of the two lines with slope $a$ (resp, $b$) passing through $(t_1,x_1)$ (resp. $(t_2,x_2)$) 
\begin{equation}\label{t_0}
t_0=\frac{(x_2-bt_2)-(x_1-at_1)}{a-b} \in (t_1,t_2)
\end{equation}
Then $A$ can be expressed as
\[
A=\bigcap_{i,j=1}^2 \{ \forall z \in J^{j}_i: B_z -\mu_i z < \sup_{t_i \le u \le t_i +h} (B_u-\mu_i u) \}=:\bigcap_{i,j=1}^2 A_{i,j}
\]
with 
\[
J^1_1=(0,t_1) , ~~J^2_1=(t_1+h,t_0) ,~~ J^1_2=(t_0,t_2) , ~J^2_2=(t_2+h,+\infty)
\]
By further conditioning (in the sense of regular conditional probability) on the value of $B(t_0)$, i.e on the event
\[
D=\{ B(t_0)=x,B(t_i)=x_i, B(t_i+h)-B(t_i)=Y_{i}(h), \overline{B}_{t_i,h}=M_{i}(h) \text{ for } i=1,2 \}
\]
We have that
\[
\bP[A\vert D]=\prod_{i,j=1}^2 \bP[A_{i,j} \vert D]
\]
by the independence of increments of the Brownian motion. Now, it is easy to see that
\[
\overline{B}_{t_i,h}+B(t_i)-\mu_i(t_i+h) \le \sup_{t_i \le u \le t_i +h} (B(u)-\mu_i u) \le \overline{B}_{t_i,h}+B(t_i)-\mu_i t_i
\]
Hence 
\begin{align*}
\overline{r_{i,j}(h)}:=\bP[\exists z \in J^{j}_i : B_z-\mu_i z >\overline{B}_{t_i,h}+B(t_i)-\mu_it_i \vert D] \le   \bP[ A_{i,j}^{\mathsf{c}} \vert D] \le \\
\bP[\exists z \in J^{j}_i : 
 B_z-\mu_i z >\overline{B}_{t_i,h}+B(t_i)-\mu_i(t_i+h)\vert D]=:\underline{r_{i,j}(h)}
\end{align*}
Now using Lemma ~\ref{bridge-line}, we have that
\[
\overline{r_{1,1}(h)}=\exp \left (-2 \frac{(M_1(h)+x_1-at_1)_{+}M_1(h)}{t_1} \right )
\]
and
\[
\underline{r_{1,1}(h)}=\exp \left (-2 \frac{(M_1(h)+x_1-a(t_1+h))_{+}(M_1(h)-a h)_{+}}{t_1} \right )
\]
Hence, by Taylor expansion it follows that
\[
\bP[A_{1,1}^{\mathsf{c}}  \vert D]=1-\frac{2M_1(h)(x_1-at_1)_{+}}{t_1}+F_{1,1}(h,M_1(h)^2)
\]
where $F_{1,1}$ is a deterministic function with $\vert F_{1,1}(x,y) \vert \le K_{1,1}(\vert x \vert +\vert y \vert)$ for a positive constant $K_{1,1}$. Similarly, we find that
\[
\bP[A_{1,2}^{\mathsf{c}}  \vert D]=1-\frac{2(M_1(h)-Y_1(h))(x_1-x_0-a(t_1-t_0))_{+}}{t_0-t_1}+F_{1,2}(h,(M_1(h)-Y_1(h))^2)
\]
and
\[
\bP[A_{2,1}^{\mathsf{c}}  \vert D]=1-\frac{2M_2(h)(x_2-x_0-b(t_2-t_0))_{+}}{t_2-t_0}+F_{2,1}(h,M_2(h)^2)
\]
and
\[
\bP[A_{2,2}^{\mathsf{c}}  \vert D]=1-2b(M_2(h)-Y_2(h))+F_{2,2}(h,(M_2(h)-Y_2(h))^2)
\]
with all the $F_{i,j}$'s deterministic verify $|F_{i,j}(x,y)| \le K_{i,j} (|x|+|y|)$ for some positive constant $K_{i,j}>0$.
Hence by multiplying all these terms together we get
\begin{align*}
\bP[A \vert D]=16\frac{M_1(h)(M_1(h)-Y_1(h))M_2(h)(M_2(h)-Y_2(h))}{t_1(t_0-t_1)(t_2-t_0)} \\ \times (x_1-at_1)_{+}(x_1-x_0-a(t_1-t_0))_{+}(x_2-x_0-b(t_2-t_0))_{+}+\\G(h,M_1(h)^2,M_2(h)^2,(M_1(h)-Y_1(h))^2),(M_2(h)-Y_2(h))^2)
\end{align*}
where $G$ is a deterministic function of $5$ variables such that
\begin{equation}\label{G}
\vert G(x_1,x_2,x_3,x_4,x_5) \vert \le K \sum_{i,j=1}^5 \vert x_i \vert  \sqrt{ \prod_{k \neq i,j} \vert x_k \vert}  
\end{equation}
for $K$ a positive constant. 
Now, let us integrate first with respect to the value of $B(t_0)$. The distribution of $B(t_0)$ given that $B(t_1)=x_1$ and $B(t_2)=x_2$ is that of
\[
\frac{(t_2-t_0)x_1}{t_2-t_1}+\frac{(t_0-t_1)x_2}{t_2-t_1}+\sigma V
\]
where:
\[
\sigma^2=\frac{(t_0-t_1)(t_2-t_0)}{t_2-t_1}>0
\]
and $V \ed \mathcal{N}(0,1)$. After integration and using the expression of $t_0$ from \eqref{t_0}, we get that
\begin{align*}
\bP[A \vert C]=\frac{16bM_1(h)(M_1(h)-Y_1(h))M_2(h)(M_2(h)-Y_2(h))}{t_1(t_2-t_1)}(x_1-at_1)_{+}\\
\times  \bE[\left ( (a-b)\sigma-U \right )_{+}^2] + G(h,M_1(h)^2,M_2(h)^2,(M_1(h)-Y_1(h))^2,(M_2(h)-Y_1(h))^2)
\end{align*}
Finally, we can integrate with respect to $(M_1(h),Y_2(h),M_2(h),Y_2(h))$. Since it can be easily seen that
\[
\bE\left [\overline{B}_{t_i,h}\left (\sup_{t_i \le u \le t_i+h}(B(u)-B(t_i+h))\right ) \Bigg \vert B(t_1)=x_1;B(t_2)=x_2 \right ]=\frac{h}{2}+o(h)
\]
using the joint law of  $(B(h),\max_{0 \le u \le h} B(u))$, and the fact that
\[
\bE[G(h,M_1(h)^2,M_2(h)^2,(M_1(h)-Y_1(h))^2)]=O(h^{\frac{5}{2}})
\]
from \eqref{G}, we get that
\[
\bP[A \vert B_{t_1}=x_1,B_{t_2}=x_2]=\frac{4b(x_1-at_1)_{+}}{t_1(t_2-t_1)}\bE[\left ( (a-b)\sigma-V \right )_{+}^2]h^2+o(h^2)
\]
By multiplying by the joint density of $(B (t_1),B(t_2))$ we get that the joint density of $(\sigma_a,\sigma_b,B(\sigma_a),B(\sigma_b))$ at $(t_1,t_2,x_2,x_1)$ is equal to 
\begin{align*}
f_{(\sigma_a,\sigma_b,B(\sigma_a),B(\sigma_b))}(t_1,t_2,x_1,x_2)=4b \left(\frac{x_1}{t_1}-a \right)_{+} \times \\
\bE\left[\left(\sqrt{(y-b)(a-y)}-\frac{V}{\sqrt{t_2-t_1}}\right)_+^2 \right]
\times \ind_{\{b \le y \le a \}} \frac{\phi \left(\frac{x_1}{\sqrt{t_1}}\right)\phi \left(\frac{x_2-x_1}{\sqrt{t_2-t_1}}\right)}{\sqrt{t_1(t_2-t_1)}}
\end{align*}
where $y:=\frac{x_2-x_1}{t_2-t_1}$. By an obvious change of variable, we see that $(\sigma_a,B(\sigma_a))$ and $(\sigma_b-\sigma_a,B(\sigma_b)-B(\sigma_a))$ are independent (as predicted by Williams path decomposition), and that the density of the latter pair is exactly $h^{a,b}$ that is stated in the Theorem. Here, we use Corollary 2.1 from \cite{gro83},  which stipulates that the two functions
\[
t \mapsto 2a^{2} \bE \left[\left(\frac{X}{a\sqrt{t}}-1\right)_{+}\right] 
\] 
and
\[
t \mapsto \frac{2b}{a-b} \bE \left[\left(\sqrt{\left(\frac{Y}{\sqrt{t}}-b\right)\left(a-\frac{Y}{\sqrt{t}}\right)}-\frac{Z}{\sqrt{t}}\right)_{+}^2 \ind_{\{ b\le \frac{Y}{\sqrt{t}} \le a\}}\right]
\]
are densities, where $X,Y,Z$ are three independent standard normal random variables. It also follows from here that the density $h_{a,b}$ is defective as
\[
\int_{(0,\infty)^2} h^{a,b}(s,z)dsdz=1-\frac{b}{a}
\]
To conclude the proof, it suffices to notice that
\begin{align*}
\bP[\sigma_b=\sigma_a,B(\sigma_b)=B(\sigma_a)]&=\bP[\sigma_b=\sigma_a]\\
&=\bP\left[\text{no jumps for the process }\tau \text{ in } \left(\frac{1}{a},\frac{1}{b}\right)\right ] =\frac{b}{a}
\end{align*}
as the number of jumps $\tau$ on any interval $(x,y)$ is a Poisson random variable with mean $\log(\frac{y}{x})$ from \cite{gro83}[Theorem 2.1].
\end{proof}

The proof of Proposition \ref{bessel-fixed} is now straightforward and goes as follows.
\begin{proof}[Proof of Proposition \ref{bessel-fixed}]
By Williams path decomposition, the process $R$ has the same distribution as
\[
(\tilde{R}(t):=\mu t-B(\sigma_{\mu}+t)+B(\sigma_{\mu}))_{t \ge 0}
\]
whose convex minorant $\tilde{C}$ is given by $\tilde{C}(t)=\mu t-K(\sigma_{\mu}+t)+K(\sigma_{\mu})$. Hence, when $u,l>0$ then the event
\[
\{ \tilde{C}'(t) \in d\alpha, G_t \in du, D_t-G_t \in dx, \tilde{R}(G_t) \in dl \}
\]
is the same as 
\begin{align*}
\{ \text{The process } \tau \text{ has a jump in } \left(\frac{1}{\mu-\alpha},\frac{1}{\mu-\alpha}+\frac{d\alpha}{(\mu-\alpha)^2} \right) \text{ of size in } dx,\\
 \sigma_{\mu-\alpha}-\sigma_{\mu} \in du, \mu(\sigma_{\mu-\alpha}-\sigma_{\mu})-(B(\sigma_{\mu-\alpha})-B(\sigma_{\mu}))\in dl\}
\end{align*}
As we take the restriction of $\tau$ on $(\frac{1}{\mu},\i)$. This latter probability is equal to
\[
\rho \left(\frac{1}{\mu-\alpha},x \right)\frac{dx d\alpha}{(\mu-\alpha)^2} \bP[\sigma_{\mu-\alpha}-\sigma_{\mu} \in du, \mu(\sigma_{\mu-\alpha}-\sigma_{\mu})-(B(\sigma_{\mu-\alpha})-B(\sigma_{\mu})) \in dl]
\]
which by Theorem \ref{density-zenith} gives the density
\[
\rho \left(\frac{1}{\mu-\alpha},x \right)\frac{dx d\alpha}{(\mu-\alpha)^2}h^{\mu,\mu-\alpha}(u,\mu u -l)dudl
\]
Finally, using the fact that conditionally on $C$, $R-C$ is distributed as a concatenation of Brownian excursions between the vertices of $C$, we get that
\[
(R(t)-C(t)| (G_t,D_t)=(x,x+u)) \ed \sigma_t(x,u) \chi_3
\]
gives the desired density. The case when $u=0$ follows similarly.
\end{proof}
We finish this section now by proving Theorem \ref{4-dim}, which 
follows from the description of Groeneboom in Theorem \ref{excursion-decomp} and the independence of increments of the pure-jump Markov process $\tau$. 

\begin{proof}[Proof of Theorem \ref{4-dim}]
Let us fix $t>0$, and enumerate the standard Brownian excursions that constitute the path of $K-B$ such that $\textbf{e}_0$ is the one corresponding to the interval $[G_t,D_t]$, the excursions $\{ \textbf{e}_i : i \ge 1\}$ the ones to the right of $D_t$, and $\{ \textbf{e}_i : i \le -1 \}$ the ones to the left of $G_t$. The process $(\Xi(u) : u \ge t)$ is measurable with respect to the $\sigma$-algebra generated by
\[
\{ \Xi(t), \textbf{e}_0(u)_{u \in \left[\frac{t-G_t}{D_t-G_t},1 \right]}\} \bigvee \{ \textbf{e}_{i}, i \ge 1\} \bigvee \{(\tau_{u+S_t}-\tau_{S_t})_{u \ge 0}\}
\]
where $S_t=\frac{1}{K'(t)}=\inf \{ u > 0 : \tau_u > t\}$ the first passage level across $t$. Whereas, the process $(\Xi(u): u \le t)$ is measurable with respect to
\[
\{ \Xi(t), \textbf{e}_0(u)_{u \in \left[0,\frac{t-G_t}{D_t-G_t} \right]}\} \bigvee \{\textbf{e}_{i}, i \le -1\} \bigvee \{(\tau_{u})_{u \le S_t}\}
\]
Using the fact that $(t,\tau_t)_{t \ge 0}$ is a strong Markov process, and the independence of increments of $\tau$, we have that conditionally on $S_t$, the process $(\tau_{u+S_t}-\tau_{S_t})_{u \ge 0}$ and $(\tau_u)_{u \le S_t}$ are independent. Moreover, by the Markov property of $\textbf{e}_0$ and the form of its transition function, it is not hard to see that $\Xi$ is a time-homogenous Markov process. The description of its semigroup is given by the two cases; In the first case when the time increment is smaller than the remaining time until the next vertex $D_t-t$, this is just a restating of the fact that the distribution of a standard Brownian excursion conditioned on having a specific value at a point in the interior of $[0,1]$ is just a concatenation of two three-dimensional Bessel bridges. The second part follows from Williams path decomposition, as conditionally on $\Xi(t)$, the process $(K'(t)u-B(u+D_t)+B(D_t))_{u \ge 0}$ is a three-dimensional Bessel process with drift $K'(t)$. 
\end{proof}

\section{Fixed slope analysis}
We give in this section the proofs of the two independent results in Theorems \ref{markov-chain} and \ref{meander}. But first, let us recall some previous results from \cite{pitmanross} regarding a sequential description of the faces of Brownian paths. We start with a definition
\begin{defn}
We say that a sequence of random variables $(\tau_n,\rho_n)_{n \ge 0}$ satisfies the $(\tau,\rho)$-recursion if for all $n \ge 0$
\[
\rho_{n+1}=U_n \rho_n \text{ and } \tau_{n+1}=\frac{ \tau_n \rho_{n+1}^2}{\tau_n Z_{n+1}^2+\rho_{n+1}^2}
\]
where $(U_n,Z_n)_{n \ge 0}$ is an i.i.d sequence with common law $ \cU([0,1]) \otimes \cN(0,1)$, and independent of $(\tau_0,\rho_0)$.
\end{defn}
We have the following theorem from \cite{pitmanross}
\begin{thm}[\cite{pitmanross},Corollary 20]\label{ross-pitman}
Let $(B(t))_{t \ge 0}$ be a standard linear Brownian motion. Fix $r>0$, set $\rho_0:=r$ and let $\rho_1>\rho_2>\cdots>0$ be the intercepts at $0$ of the linear segments of the concave majorant of $(B(t), 0 \le t \le T_r)$ where $T_r:=\inf \{ t  \ge 0 : B(t)=r\}$ and let $\tau_0:=T_r>\tau_1>\cdots>0$ denote the decreasing sequence of times $t$ such that $(t,B(t))$ is a vertex of the concave majorant of $(B(t) ,0 \le t \le T_r)$. Then, the sequence of pairs $(\tau_n,\rho_n)_{n \ge 0}$ follows the $(\tau,\rho)$-recursion.
\end{thm}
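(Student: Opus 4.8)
The plan is to generate the vertices from right to left, peeling off one face at a time, reduce the theorem to a single one-step transition together with a conditional-independence statement, and then iterate using the strong Markov property of $B$. Since $K$ is the concave majorant on $[0,T_r]$, its slopes increase from right to left; the rightmost face has the smallest slope $s_1:=K'(T_r-)$, meets the endpoint $(\tau_0,\kappa_0)=(T_r,r)$, and has intercept $\rho_1=r-s_1T_r$, joining the next face at the vertex $\tau_1$. Iterating the claim ``$(\tau_{n+1},\rho_{n+1})$ is obtained from $(\tau_n,\rho_n)$ by one step of the $(\tau,\rho)$-recursion, independently of the faces already peeled'' immediately gives the theorem, so the whole content lies in the single step $0\to 1$.

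For the one-step law I would first describe the last face concretely. Concavity of $K$ gives
\[
s_1=\inf_{0\le t<T_r}\frac{r-B(t)}{T_r-t},
\]
attained a.s. uniquely at $t=\tau_1$. Writing $u=T_r-t$ and $\hat B(u):=r-B(T_r-u)$ for the path reversed from the endpoint, this reads $s_1=\inf_{0<u\le T_r}\hat B(u)/u=\mathfrak{M}(\hat B)$ with $T_r-\tau_1=\mathfrak{B}(\hat B)$; by Williams' time reversal $\hat B$ is a $\mathrm{BES}^0(3)$ run up to its last passage at $r$, exactly the object treated in Theorem \ref{meander}. From this description (or, alternatively, by a direct Groeneboom-type estimate of the probability that the bridge of $B$ stays below a candidate last-face line, via Lemma \ref{bridge-line}, followed by the change of variables $(s_1,\tau_1)\mapsto(\rho_1,\tau_1)$) I would establish
\[
\rho_1=U_0\rho_0,\qquad \frac1{\tau_1}-\frac1{\tau_0}=\frac{Z_1^2}{\rho_1^2},
\]
with $U_0\sim\cU([0,1])$ and $Z_1\sim\cN(0,1)$ independent of each other and of $(\tau_0,\rho_0)$; the second identity is precisely $\tau_1=\tau_0\rho_1^2/(\tau_0 Z_1^2+\rho_1^2)$. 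Here the uniformity of the intercept ratio $\rho_1/\rho_0$ is the Brownian continuous analogue of the uniform stick-breaking of faces of convex minorants, and the inverse-time increment proportional to $\chi_1^2$ encodes the conditional law of the vertex time given the intercept.

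To close the recursion I would remove the last face and rescale. Set $\check B(t):=B(t)-s_1 t$ on $[0,\tau_1]$; this leaves every intercept $\rho_n$ unchanged and satisfies $\check B(0)=0$ and $\check B(\tau_1)=\rho_1$. Moreover $\check K:=K-s_1t$ is concave and, having all face slopes $s_j-s_1>0$, is strictly increasing on $[0,\tau_1]$, so $\check B(t)\le\check K(t)<\rho_1$ for $t<\tau_1$; hence $\tau_1$ is genuinely the \emph{first} passage time of $\check B$ to level $\rho_1$, and the vertices $\tau_1>\tau_2>\cdots$ and intercepts $\rho_1>\rho_2>\cdots$ of $K$ on $[0,\tau_1]$ are exactly the concave-majorant data of $\check B$ on $[0,\tau_1]$. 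Conditioning on the last face and invoking the strong Markov property of $B$ at $\tau_1$ together with Brownian scaling, the task reduces to showing that, given the last face, $(\check B(t))_{0\le t\le\tau_1}$ has the law of a standard Brownian motion run until its first passage to level $\rho_1$, independent of $(U_0,Z_1)$. Granting this, $(\tau_1,\rho_1)$ plays the role of $(\tau_0,\rho_0)$ for a fresh first-passage problem at level $\rho_1$, the same one-step law applies, and induction yields the $(\tau,\rho)$-recursion with the advertised i.i.d.\ sequence $(U_n,Z_n)$.

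The main obstacle is this last step: conditioning on $s_1$ (an event depending on the whole path) ostensibly leaves $\check B$ with a drift $-s_1$, and one must show that this conditioning in fact restores an undrifted first-passage law at the reduced level, so that the recursion is genuinely time-homogeneous. This is exactly the circle of ideas behind Williams' decomposition and the absolute-continuity statements of Theorem \ref{meander}, and I expect the cleanest justification to run through the $\mathrm{BES}^0(3)$ time-reversal identification of $\hat B$ above, with the Radon--Nikodym factor there accounting for the slope conditioning. The accompanying one-step density computation is the other technically demanding point, but it is of the same type as the computations already carried out for Proposition \ref{fixed-time} and Theorem \ref{density-zenith}.
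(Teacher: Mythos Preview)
The paper does not prove Theorem~\ref{ross-pitman}; it is quoted verbatim from \cite{pitmanross} (Corollary~20 there) and then used as a black box inside the proof of Theorem~\ref{markov-chain}. So there is no ``paper's own proof'' to compare against.

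As for your sketch on its own merits: the face-peeling strategy is the right picture, and your reduction to a single transition plus a regeneration statement is exactly how such recursions are built. You also correctly isolate the real difficulty, namely that after conditioning on the rightmost slope $s_1$ and subtracting it off, the residual path $\check B$ on $[0,\tau_1]$ must again be an \emph{unconditioned} Brownian first-passage path to level $\rho_1$, independent of $(U_0,Z_1)$. But you do not actually establish this; you gesture at Williams' reversal and Theorem~\ref{meander} and say you ``expect'' the Radon--Nikodym factor to cancel the slope conditioning. That is the entire content of the theorem, and it is left open in your write-up. The one-step density identities $\rho_1=U_0\rho_0$ and $1/\tau_1-1/\tau_0=Z_1^2/\rho_1^2$ are also asserted rather than derived. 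So what you have is a coherent plan with the two load-bearing computations missing; in the original \cite{pitmanross} these are obtained from the Poissonian description of the faces (slopes and lengths as a marked Poisson process), which makes both the one-step law and the independence across faces immediate, bypassing the delicate conditioning argument you propose.
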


Let us now prove Theorem \ref{markov-chain}
\begin{proof}[Proof of Theorem \ref{markov-chain}]
Recall that $\tau_0:=\sigma_1$, $\rho_0:=B(\tau_0)-\tau_0$, and the sequence $(\tau_n,\rho_n)_{n \ge 0}$ is defined in equations \eqref{vertices} and \eqref{defn-rho}. Let $(M_n:=\frac{\Delta \kappa_n}{\Delta \tau_n})_{n \ge 1}$ be the decreasing sequence of slopes of $K$ that are greater than $1$. By Groeneboom Poisson description in Theorem \ref{poisson}, it follows that the point process of inverse slopes $\left(\frac{1}{M_n}\right)_{n \ge 1}$ is a Poisson point process on $(0,1)$ with intensity measure of density $\frac{r}{dr}$ with respect to the Lebesgue measure. It follows thus easily by the stick-breaking description that there exists a sequence $(\hat{U}_n)_{n \ge 1}$ of i.i.d uniform random variables on $[0,1]$ such that 
\[
M_n=\prod_{i=1}^{n} \hat{U}_i
\]
Furthermore, conditionally on the sequence $(M_n)_{n \ge 1}$, the sequence of time-spacings $(\Delta \tau_n)_{n \ge 1}$ is distributed as $(\frac{\hat{Q}_n}{M_n^2})_{n \ge 1}$ where $(\hat{Q}_n)_{n \ge 1}$ is an i.i.d sequence of random variables with law $\chi_1^2$, that is independent of the sequence $(\hat{U}_n)_{n \ge 1}$. This follows from the fact that the time-spacing corresponding to an inverse slope $r$ has density $\frac{1}{r\sqrt{t}}\phi \left(\frac{\sqrt{t}}{r} \right)dt$, which is the density of $r^2\chi_1^2$. Thus, as
\[
\tau_1=\sum_{j=2}^{\i} \Delta \tau_j =\sum_{j=2}^{\i} \frac{\hat{Q}_i}{M_i^2}, \text{ and } \kappa_1=\sum_{j=2}^{\i} \frac{\hat{Q}_i}{M_i}
\]
it follows that
\[
(\tau_1,\Delta \tau_1, \kappa_1, \Delta \kappa_1) \ed (U^2 \tau_0, U^2 Q, U \kappa_0, UQ)
\]
where the pair $(U,Q)$ is independent of $(\tau_0,\kappa_0)$ and has law $\cU([0,1]) \otimes \chi_1^2$. In fact, we can say more, if we define the variables 
\begin{equation}
\label{substitution}
\tilde{\tau}_0=\frac{(\Delta \kappa_1)^2}{(\Delta \tau_1)^2}, \tilde{\kappa}_0= \frac{\Delta \kappa_1}{\Delta \tau_1} \kappa_1 , \tilde{U}=\frac{\Delta \tau_1}{\Delta \kappa_1}, \tilde{Q}=\frac{(\Delta \kappa_1)^2}{\Delta \tau_1}
\end{equation}
Then
\[
(\tilde{\tau}_0,\tilde{\kappa}_0,\tilde{U},\tilde{Q}) \ed (\tau_0,\kappa_0,U,Q)
\]
Similarly we define $\tilde{\rho}_0:=\tilde{\kappa}_0-\tilde{\tau_0}$. Now we move on to find the second identity in distribution. From Williams path decomposition, it is clear that 
\[
(B_{-1}(t) , 0 \le t \le \sigma_{1} \vert  \sigma_{1}=\tau_0, B_{-1}(\sigma_{1})=\rho_0 )
\]
has the same distribution as a Brownian first passage bridge from $(0,0)$ to $(\tau_{0},\rho_0)$. Hence, by applying Theorem \ref{ross-pitman} the sequence $(\tau_n,\rho_n)_{n \ge 0}$ follows the $(\tau,\rho)$-recursion. Thus we can write
\[
\rho_{n+1}=U_{n+1} \rho_n, ~ \tau_{n+1}=\frac{\tau_n \rho_{n+1}^2}{\tau_n Q_{n+1} +\rho_{n+1}^2}
\]
where $(U_n,Q_n)_{n \ge 0}$ is an i.i.d sequence of common law $\cU([0,1]) \otimes \chi_1^2$ independent of $(\tau_0,\rho_0)$. Now, by using the equalities in \eqref{substitution}, we see that
\[
(\tau_0,\rho_0,U_1,Q_1) = h(\tilde{\tau}_0,\tilde{\rho}_0,\tilde{U},\tilde{Q}) \ed h(\tau_0,\rho_0,U,Q)
\]
which proves the first part.
Finally, it suffices to notice that
\[
\frac{\rho_1^2}{\tau_1}=Q_1+\frac{U_1^2\rho_0^2}{\tau_0}=\frac{\tilde{\rho_0}^2}{\tilde{\tau_0}} \ed  \frac{\rho_0^2}{\tau_0}  \quad \text{ and that } \quad \frac{\rho_{n+1}^2}{\tau_{n+1}}=Q_{n+1}+\frac{U_{n+1}^2 \rho_n^2}{\tau_n}
\]
for all $n \ge 0$ 
to see that $(\frac{\rho_n}{\sqrt{\tau_n}})_{n \ge 0}$ is a stationary Markov chain. On the other hand, we also have the identity
\[
\frac{\kappa_1^2}{\tau_1}=\frac{\tilde{\kappa_0}^2}{\tilde{\tau_0}} \ed \frac{\kappa_0^2}{\tau_0}
\]
Now,  by the same reasoning as before,  conditioning on the slopes $(M_n)_{n \ge 0}$, we obtain the following identity in law
\[
(\tau_{n+1},\Delta \tau_{n+1},\kappa_{n+1},\Delta \kappa_{n+1}) \ed \left(\frac{U^2 \tau_n}{M_n^2}, \frac{U^2 Q}{M_n^2}, \frac{U \kappa_n}{M_n}, \frac{UQ}{M_n}\right)
\]
where on the right-hand side $(M_n,\tau_n,\kappa_n)$ is independent of $(U,Q)$. It follows easily that $(\frac{\kappa_n}{\sqrt{\tau_n}})_{n \ge 0}$ is stationary, while clearly being non-Markovian from the $(\tau,\rho)$-recursion equalities.
\end{proof}

We finish this section now by proving the Theorem \ref{meander} about the pseudo-meanders. We prove the statement for each of the meanders separately.
\begin{proof}[Proof of Theorem \ref{meander} for $\tilde{B}^{\text{me}}$]
From Williams path decomposition we know that 
\[
(B_{-\mu}(t) , 0 \le t \le \sigma_{\mu} \vert \sigma_{\mu}=T, B_{-\mu}(\sigma_{\mu})=y)
\]
has the same distribution as a Brownian first passage bridge from $(0,0)$ to $(T,y)$. Hence, using the representation of first passage bridges in terms of three-dimensional Bessel bridges (See \cite{bcp}), and Brownian scaling, we have that
\begin{align}
\label{bridge1}
\begin{split}
\left(\tilde{B}^{\text{me}}(u) , 0 \le u \le 1 \vert \sigma_{\mu}=T, B_{-\mu}(\sigma_{\mu})=y \right) \ed \\
\left(\text{BES}^0(3) (t) , 0 \le t \le 1 \vert \text{BES}^0(3)(1)=\frac{y}{\sqrt{T}}\right)
\end{split}
\end{align}
Hence
\begin{equation}
\label{law:me}
\left(\tilde{B}^{\text{me}}(u) , 0 \le u \le 1 \vert \tilde{B}^{\text{me}}(1)=r \right) \ed \left(\text{BES}^0(3) (t) , 0 \le t \le 1 \vert \text{BES}^0(3)(1)=r\right)
\end{equation}
It follows thus that the law of $\tilde{B}^{\text{me}}$ is absolutely continuous with respect to the law of $\text{BES}^0(3)$. From Williams path decomposition, we have that
\begin{align}\label{equ-williams}
(\mu^2 \sigma_{\mu}, \mu B(\sigma_{\mu})) \ed (\chi_3^2 \beta_{1,2}^2,\chi_3^2 \beta_{1,2})
\end{align}
where on the right hand side $\chi_3$ and $\beta_{1,2}$ are independent. Hence, it follows that $\tilde{B}^{\text{me}}(1) \ed \chi_3(1-\beta_{1,2})$, and hence its density is equal to
\[
f_{\tilde{B}^{\text{me}}(1)} (t)=4t \overline{\Phi}(t) , t >0
\]
The Radon Nikodym derivative then follows from dividing this last density by $2t^2\phi(t)$; the density of $\text{BES}^0(3)(1)\ed \chi_3$.
\end{proof}
Before proving the second part of Theorem \ref{meander} for the second meander, we will prove the following lemma
\begin{lem}
\label{drift:bridges}
Let $R$ be a three-dimensional Bessel bridge from $(0,0)$ to $(1,r)$, and let $0 \le m \le r$, then for all $k \ge 0$, we have 
\[
(R(u) + k u , 0 \le u \le 1 \vert \mathfrak{M}(R)=m) \ed (W(u), 0 \le u \le 1 \vert \mathfrak{M}(W)=m+ k)
\]
where $W$ is a three-dimensional Bessel bridge from $(0,0)$ to $(1,r+k)$. 
\end{lem}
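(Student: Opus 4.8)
The plan is to transport the claim, via time inversion, into an elementary identity between two processes obtained by splicing a Brownian motion (run to a first passage time) and a translate of $\text{BES}^{0}(3)$, and then to read it off from translation invariance of Brownian motion.

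Write $\mathrm{id}(u)=u$. The two pointwise identities $\mathfrak{M}(f+k\,\mathrm{id})=\mathfrak{M}(f)+k$ and $\mathfrak{B}(f+k\,\mathrm{id})=\mathfrak{B}(f)$ are immediate, so $f\mapsto f+k\,\mathrm{id}$ carries the event $\{\mathfrak{M}=m\}$ bijectively onto $\{\mathfrak{M}=m+k\}$, and the lemma says exactly that this map pushes $\mathrm{Law}(R\mid\mathfrak{M}(R)=m)$ forward to $\mathrm{Law}(W\mid\mathfrak{M}(W)=m+k)$. Apply the time inversion $\iota\colon(f(u))_{0<u\le1}\mapsto(v\,f(1/v))_{v\ge1}$. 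Since a three-dimensional Bessel bridge from $0$ to $\ell$ is $\text{BES}^{0}(3)$ conditioned on its value at time $1$, and $\text{BES}^{0}(3)$ is invariant under the time inversion $R(\cdot)\mapsto(\cdot)\,R(1/\cdot)$, the process $\iota(R)$ is a $\text{BES}^{\ell}(3)$ on $[1,\infty)$; moreover $\mathfrak{M}(f)=\inf_{v\ge1}(\iota f)(v)$ and $\iota(f+k\,\mathrm{id})=(\iota f)+k$, so the linear drift turns into an additive constant. Hence it suffices to prove, as laws of processes on $[1,\infty)$,
\[
\bigl(\text{BES}^{r}(3)+k\ \big|\ \textstyle\inf_{[1,\infty)}=m+k\bigr)\ \ed\ \bigl(\text{BES}^{r+k}(3)\ \big|\ \textstyle\inf_{[1,\infty)}=m+k\bigr),
\]
and the left-hand side is $\text{BES}^{r}(3)$ conditioned on $\inf_{[1,\infty)}=m$, shifted up by $k$.

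For $0<\ell<a$, split a $\text{BES}^{a}(3)$ conditioned on $\{\inf_{[1,\infty)}=\ell\}$ at the time $H$ it first reaches level $\ell$; on this event the infimum is attained (the path tends to $+\infty$), so $H<\infty$, and the strong Markov property makes the pre-$H$ and post-$H$ pieces conditionally independent given $H$. Two standard diffusion facts identify them: (a) a $\text{BES}^{a}(3)$ conditioned to ever hit $\ell$ agrees, up to that hitting time, with a Brownian motion started at $a$ run until it hits $\ell$ --- the Doob $h$-transform of $\text{BES}(3)$ by $h(\rho)=\ell/\rho$ has generator $\tfrac12\partial^{2}$; (b) a $\text{BES}^{\ell}(3)$ conditioned to stay $\ge\ell$, an $h$-transform by $h(\rho)=1-\ell/\rho$ (equivalently a limit of conditionings to stay above $\ell-\varepsilon$), has generator $\tfrac12\partial^{2}+\tfrac1{\rho-\ell}\partial$ and hence equals $\ell+\text{BES}^{0}(3)$. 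Therefore $\text{BES}^{a}(3)\mid\{\inf_{[1,\infty)}=\ell\}$ is the concatenation at time $H$ of a Brownian motion from $a$ run until its first hitting time of $\ell$ and an independent copy of $\ell+\text{BES}^{0}(3)$.

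Applying this with $(a,\ell)=(r,m)$ and translating up by $k$ gives: a Brownian motion from $r+k$ run until its first hitting time of $m+k$, concatenated with an independent $(m+k)+\text{BES}^{0}(3)$. Applying it with $(a,\ell)=(r+k,m+k)$ gives literally the same description, because both the Brownian piece (together with its random duration $H$) and the Bessel piece are invariant under the simultaneous shift $(r,m)\mapsto(r+k,m+k)$, by translation invariance of Brownian motion. So the two conditioned processes coincide, and undoing $\iota$ gives the lemma. The values $m=0$ and $m=r$ give null events and may be discarded, so one works with $0<m<r$ (and $k>0$, the case $k=0$ being trivial). \emph{The step that needs care} is making the splitting at $H$ a rigorous disintegration: that conditioning on the value of $\inf_{[1,\infty)}\text{BES}^{a}(3)$ --- a random variable with a density --- genuinely factors as ``condition the process to reach that level, stop it at $H$, and append an independent copy of that level plus $\text{BES}^{0}(3)$'', including the $h$-transform limit hidden in ``conditioned to stay above its starting point''. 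One obtains this by conditioning instead on $\{\inf_{[1,\infty)}\in(\ell,\ell+\delta)\}$ and letting $\delta\downarrow0$, using continuity of the relevant transition densities; the rest is formal bookkeeping with time inversion and translation.
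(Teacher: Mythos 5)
Your proof is correct and follows essentially the same route as the paper's: time inversion turning the bridge into a $\mathrm{BES}^r(3)$ on a half-line and the linear drift into an additive constant, then Williams' path decomposition at the ultimate minimum, then translation invariance of the Brownian pre-minimum piece. The only difference is organizational: the paper further identifies the two conditional pieces as a Brownian excursion and a Bessel bridge and checks the law of the split time $\mathfrak{B}$ separately, whereas you absorb both steps at once by noting that the entire Williams decomposition, including the random splitting time, is invariant under the simultaneous shift $(r,m)\mapsto(r+k,m+k)$.
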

\begin{proof}
The key idea is to provide a path decomposition of the Bessel bridge at the time it reaches its minslope. For that purpose, we use an argument of time-inversion. We introduce the process $Y$ defined as 
\[
Y(t):=(1+t)R\left (\frac{1}{1+t} \right), t \ge 0
\]
$Y$ is a $\text{BES}^r(3)$ (a three-dimensional Bessel process started at $r$). Let $S:=\text{argmin}_{t \ge 0} Y(t)$ and $\zeta=\inf_{t \ge 0} Y(t)$. By Williams path decomposition of a Bessel process when it reaches its ultimate minimum (see \cite{wil}), we have that given $\{\zeta=m\}$, the two processes 
\[
(X^1(t):=Y(S+t)-\zeta , t \ge 0) \text{ and } (X^2(t):=Y(t), 0 \le t \le S)
\]
are independent, with $X^1$ is a $\text{BES}^0(3)$, while $X^2$ is a Brownian motion started at $r$ and killed when it first hits $m$. As
\[
\mathfrak{B}(R)=\frac{1}{1+S} \text{ and } \mathfrak{M}(R)=\zeta
\]
By conditioning further on $\{ S=T\}$, the law of $X^1$ remains that of $\text{BES}^0(3)$, whereas $X^2$ has now the law of a first passage bridge between $(0,r)$ and $(T,m)$, and by using the theorem in the introduction of \cite{bcp}, we get that given $\{S=T, \zeta=m\}$, $X^2-m$ is a three-dimensional Bessel bridge from $(0,r-m)$ to $(T,0)$. Hence given $\{S=T,\zeta=m\}=\{ \mathfrak{B}(R)=\beta,\mathfrak{M}(R)=m\}$ where $\beta=\frac{1}{1+T}$, we have that
\begin{align}\label{eq-law}
\begin{split}
(R(s)-ms, 0 \le s \le \beta |  \mathfrak{B}(R)=\beta,\mathfrak{M}(R)=m) = \\
\left(sX^{1}\left(\frac{\beta-s}{\beta s}\right), 0 \le s \le \beta \vert S=T,\zeta=m \right)
\end{split}
\end{align}
However, by the path transformation that maps the Bessel processes to Bessel bridges (see the end of \cite{pit83}), the process on the right-hand side of \eqref{eq-law} is a Brownian excursion of length $\beta$. Similarly, we have that
\begin{align} \label{eq-law2}
\begin{split}
(R(s+\beta)-R(\beta)-ms,0 \le s \le 1-\beta(R) | \mathfrak{B}(R)=\beta,\mathfrak{M}(R)=m ) =\\
\left (\left(s+\beta \right)X^{2}\left(\frac{1-\beta-s}{s+\beta}\right), 0 \le s \le 1-\beta \vert S=T,\zeta=m \right)
 \end{split}
\end{align}
and the law of the right-hand side process is that of a Bessel bridge between $(0,0)$ and $(1-\beta,r-m)$.
Applying this path decomposition to both the processes $R$ and $W$ we get that 
\begin{align}
\begin{split}
(R(u)+k u ,0 \le u \le 1 \vert \mathfrak{M}(R)=m, \mathfrak{B}(R)=\beta) \ed \\
(W(u) , 0 \le u \le 1 \vert \mathfrak{M}(W)=m+k, \mathfrak{B}(W)=\beta)
\end{split}
\end{align}
To finish our proof, it suffices to prove that :
\[
(\mathfrak{B}(R) \vert \mathfrak{M}(R)=m) \ed (\mathfrak{B}(W) \vert \mathfrak{M}(W)=m+k)
\]
However by referring to the time-inversion argument using $Y$ and its path decomposition, proving this assertion is the same as showing that the distribution of the hitting time of a BM started at $r$ (resp. $r+k$) of the level $m$ (resp. $m+k$) are the same, which is true.
\end{proof}

We can now finish the proof of Theorem \ref{meander}
\begin{proof}[Proof of Theorem \ref{meander} for $\hat{B}^{\text{me}}$]
By definition we have that
\[
\check{B}^{\text{me}}(u)=\tilde{B}^{\text{me}}(u)+\mu \sqrt{\sigma_{\mu}} u ,~ 0 \le u \le 1
\]
Let us condition on the value of $\{B_{-\mu}(\sigma_{\mu}),\sigma_{\mu},\mathfrak{M}(\hat{B}^{\text{me}})\}$, we have then that 
\begin{align*}
(\check{B}^{\text{me}}(u) , 0 \le u \le 1 \vert B_{-\mu}(\sigma_{\mu})=y,\sigma_{\mu}=T,\mathfrak{M}(\hat{B}^{\text{me}})=m) =\\
(\tilde{B}^{\text{me}}(u)+\mu \sqrt{T} u , 0 \le u \le 1 | B_{-\mu}(\sigma_{\mu})=y,\sigma_{\mu}=T,\mathfrak{M}(\tilde{B}^{\text{me}})=m-\mu \sqrt{T})
\end{align*}
Using the equality in law in \eqref{law:me}, we deduce that
\begin{align*}
(\check{B}^{\text{me}}(u) , 0 \le u \le 1 \vert B_{-\mu}(\sigma_{\mu})=y,\sigma_{\mu}=T,\mathfrak{M}(\hat{B}^{\text{me}})=m) \ed \\
 (Y(u)+\mu \sqrt{T} u , 0 \le u \le 1 \vert Y(1)=\frac{y}{\sqrt{T}} , \mathfrak{M}(Y)=m- \mu\sqrt{T})
\end{align*}
where $Y \ed \text{BES}^0(3)$. Using now Lemma \ref{drift:bridges}, we have that
\begin{align*}
(\check{B}^{\text{me}}(u) , 0 \le u \le 1 \vert B_{-\mu}(\sigma_{\mu})=y,\sigma_{\mu}=T,\mathfrak{M}(\hat{B}^{\text{me}})=m) \ed \\
(Y(u) , 0 \le u \le 1 \vert Y(1)=\frac{y}{\sqrt{T}}+\mu \sqrt{T} ,\mathfrak{M}(Y)=m)
\end{align*}
Now, since $\hat{B}^{\text{me}}(1)=\frac{y}{\sqrt{T}}+\mu \sqrt{T}$ given that $\{B_{-\mu}(\sigma_{\mu})=y,\sigma_{\mu}=T\}$, we deduce that
\begin{align*}
(\check{B}^{\text{me}}(u) , 0 \le u \le 1 \vert \hat{B}^{\text{me}}(1)=r,\mathfrak{M}(\hat{B}^{\text{me}})=m) \ed \\
(Y(u) , 0 \le u \le 1 \vert Y(1)=r,\mathfrak{M}(Y)=m)
\end{align*}
Hence, $\hat{B}^{\text{me}}$ is absolutely continuous with respect to the law of the three-dimensional Bessel process on $[0,1]$. To find the expression of the Radon-Nikodym it suffices the compute the ratio of both the joint densities of $(\hat{B}^{\text{me}}(1),\mathfrak{M}(\hat{B}^{\text{me}}))$ and $(Y(1),M(Y))$. 
First, see that from the arguments of the proof of the first part of Theorem \ref{meander}, given that $Y(1)=r$, $\mathfrak{M}(Y)$ has the same distribution as the infimum of a $\text{BES}^r(3)$ which is uniform on $[0,r]$, hence 
\begin{align}\label{ind-minslope}
\left(Y(1),\frac{\mathfrak{M}(Y)}{Y(1)}\right) \ed (\chi_3,U)
\end{align}
where $\chi_3$ and $U$ are independent, with $U \sim \mathcal{U}([0,1])$. On the other hand, from \eqref{bridge1}, we have that given $(\sigma_{\mu},B_{-\mu}(\sigma_{\mu}))$, $\tilde{B}^{\text{me}}$ is a Bessel bridge between $(0,0)$ and $(1,\check{B}^{\text{me}})=(1,\frac{B_{-\mu}(\sigma_{\mu})}{\sqrt{\sigma_{\mu}}})$, hence by using the observation in \eqref{ind-minslope}, we get that $U_{\mu}:=\frac{\mathfrak{M}(\tilde{B}^{\text{me}})}{\tilde{B}^{\text{me}}(1)}$ is independent from $(\sigma_{\mu},B_{-\mu}(\sigma_{\mu}))$ and is uniform on $[0,1]$, however 
\[
\mathfrak{M}(\hat{B}^{\text{me}})=\mathfrak{M}(\tilde{B}^{\text{me}})+\mu \sqrt{\sigma_{\mu}}=U_{\mu}\frac{B_{-\mu}(\sigma_{\mu})}{\sqrt{\sigma_{\mu}}}+\mu \sqrt{\sigma_{\mu}}=U_{\mu}\frac{B(\sigma_{\mu})}{\sqrt{\sigma_{\mu}}}+\mu \sqrt{\sigma_{\mu}}(1-U_{\mu})
\]
Using the factorization of Proposition in \eqref{equ-williams}, we get that :
\[
\mathfrak{M}(\hat{B}^{\text{me}})=U_{\mu}\chi_3+(1-U_{\mu})\chi_3 \beta_{1,2} \text{  and } \hat{B}^{\text{me}}_1=\chi_3
\]
where all the random variables appearing are independent, thus
\[
\left(\hat{B}^{\text{me}}_1,\frac{\mathfrak{M}(\hat{B}^{\text{me}})}{ \hat{B}^{\text{me}}_1}\right) = (\chi_3,\beta_{1,2}(1-U_{\mu})+U_{\mu}) \ed(\chi_3, \beta_{2,1}) 
\]
Dividing the joint densities of those two pairs gives us the expression desired of the density of $\hat{B}^{\text{me}}$ with respect to $Y$.
\end{proof}
\section{$2K-B$ conjecture}\label{2k-b}
As pointed out in the introduction, the processes $2K-B$ and $\text{BES}^{0}(5)$ have the same one-dimensional marginal distribution at each time $t \ge 0$, that is the distribution of $\sqrt{t} \chi_5$. 
Not only that, these two processes share some more common properties, such as having the same quadratic variation, and being invariant under both Brownian scaling and time inversion. 
The hope behind the previous Markovian analysis was to use the process $\Psi$ and the theory of Markov functions (see \cite{pitroger81}) to show that these two processes were identical in law.
However, the complexity of the transition probabilities of the process $\Psi$ makes checking the intertwining condition a very complicated task. The following proposition further support the conjecture \ref{conjec} by showing that, in some sense, the processes $2K-B$ and $\text{BES}^{0}(5)$ have \textit{locally} the same law.
\begin{prop}\label{local-2k-b}
Let $\varphi$ be a  twice continuously differentiable function with compact support on $(0,\i)$, and $z >0$ then 
\begin{align*}
\lim_{h \downarrow 0} \frac{1}{h} \left(\bE[\varphi(2K(1+h)-B(1+h))-\varphi(2K(1)-B(1)) | 2K(1)-B(1)=z]\right)=\\ 
\frac{2}{z}\varphi'(z)+\frac{1}{2} \varphi''(z)
\end{align*}
\end{prop}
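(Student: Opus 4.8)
The statement asserts that the infinitesimal generator of $2K-B$, applied at time $t=1$ to a test function $\varphi$ and conditioned on $2K(1)-B(1)=z$, agrees with the generator $\tfrac12\varphi'' + \tfrac{2}{z}\varphi'$ of $\mathrm{BES}^0(5)$. The plan is to compute the conditional increment by splitting according to whether or not the next vertex $D_1$ of $K$ occurs before time $1+h$; since the vertices of $K$ accumulate only at $0$ and $\infty$, we have $D_1 > 1$ almost surely, so $\bP[D_1 < 1+h \mid \cdots] = O(h)$ and the contribution of that event to the increment is $O(h)$ as well (using that $\varphi$ is bounded), hence negligible after dividing by $h$. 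So the whole computation reduces to the event $\{D_1 \ge 1+h\}$, on which, by Theorem \ref{4-dim} (first bullet), $2K(1+h)-B(1+h) = 2(K(1)+K'(1)h) - B(1+h)$ and the relevant path piece of $K-B$ on $[1,1+h]$ is a slice of a three-dimensional Bessel bridge.

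First I would set up the conditioning carefully. Conditioning on $2K(1)-B(1)=z$ is, by Proposition \ref{fixed-time}, an average over the conditional law of $(K'(1),I(1),K(1)-B(1),G_1,D_1)$; what matters is that, given all of $\Psi(1)=(K'(1),K(1),K(1)-B(1),D_1-1)=(a,k,y,w)$ with $w>0$, the process $(2K(1+u)-B(1+u))_{0\le u \le h}$ for $h<w$ is distributed as $(2(k+au) - B(1) - (\text{Bessel-bridge increment}))$. More precisely, writing $Y(u):=K(1+u)-B(1+u)$, Theorem \ref{4-dim} says $Y$ on $[0,w]$ is a three-dimensional Bessel bridge from $(0,y)$ to $(w,0)$, run for time $h$, and $2K(1+h)-B(1+h) = z + 2ah - (Y(h)-y) = z + 2ah - Y(h) + y$. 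So the conditional increment of $\varphi$ is $\bE[\varphi(z + 2ah + y - Y(h)) - \varphi(z)]$, and one expands to second order in $h$ using $Y(h) = y + (\text{drift}_y\, h) + (\text{Brownian part})$, where for a $\mathrm{BES}(3)$-bridge from $y$ toward $0$ the local drift at the start is $\tfrac1y - \tfrac{y}{w}$ (the $\tfrac1y$ from the Bessel drift, the $-\tfrac{y}{w}$ from the bridge pull), and the quadratic variation contributes $\tfrac12\varphi''(z)h$. Collecting the $O(h)$ terms gives $\varphi'(z)\bigl(2a - \tfrac1y + \tfrac{y}{w}\bigr) + \tfrac12\varphi''(z)$, and then one must average this over the conditional law of $(a,y,w)$ given $2K(1)-B(1)=z$ (equivalently, given $2a + \text{(something)}\cdots$ — note $z = 2K(1)-B(1) = K(1) + (K(1)-B(1))$, so the constraint is on $k+y$, not directly on $a,y,w$).

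The crux is therefore the identity
\[
\bE\!\left[\,2K'(1) - \frac{1}{K(1)-B(1)} + \frac{K(1)-B(1)}{D_1-1}\ \Big|\ 2K(1)-B(1)=z\right] = \frac{2}{z}.
\]
This I would verify directly from the joint density $f_5$ of Proposition \ref{fixed-time}: change variables from $(a,b,y,v,w)$ (with $v=1/G_1$, $w=D_1$) to variables including $z=a+b+\cdots$ — actually simpler to use $f_3$ for $(K'(1),I(1),K(1)-B(1))=(a,b,y)$ together with the conditional law of $D_1$ given $(a,b,y)$ stated in the second bullet of Proposition \ref{fixed-time}. Since $K(1)=I(1)+K'(1)=a+b$, the conditioning event is $a+b+y = z$ (because $2K(1)-B(1) = K(1)+(K(1)-B(1)) = (a+b)+y$), wait — rather $2K(1)-B(1) = 2(a+b) - (a+b-y) = a+b+y$. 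Good: so conditioning on $z$ fixes $a+b+y=z$, and $f_3(a,b,y) = 4y z\, \phi(z)\ind_{\{a,b,y>0,\,a+b+y=z\}}$ restricted to the simplex is uniform in $(a,b)$ on $\{a,b>0,\ a+b = z-y\}$ and the marginal of $y$ has density $\propto y(z-y)$ on $(0,z)$. Then $\bE[1/y \mid z]$ and $\bE[(K'(1))\mid z]$ and $\bE[(y/(D_1-1))\mid z]$ are each explicit: the last uses $\bE[1/(D_1-1)\mid a,b,y] = \int g(t)/t\,dt$ with $g$ as in Proposition \ref{fixed-time}, and $\int f_{a,y}(t)/t\,dt = \bE[1/T_{a,y}]$, $\int f^*_{a,y}(t)/t\,dt = \bE[1/G_{a,y}] = a/y$ are standard inverse-Gaussian moments. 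Assembling these three explicit integrals and checking the sum equals $2/z$ is the main computational obstacle; everything else is the routine $O(h)$ Taylor expansion of $\varphi$ against a diffusion increment and the a.s. bound $D_1>1$ that kills the remainder.
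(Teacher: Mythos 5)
Your overall strategy is the paper's: condition on $\Psi(1)=(K'(1),K(1),K(1)-B(1),D_1-1)$, use the first bullet of Theorem \ref{4-dim} to describe $K(\cdot)-B(\cdot)$ on $[1,D_1]$ as a three\-/dimensional Bessel bridge, Taylor/It\^o-expand, and then average the resulting drift coefficient against the conditional law extracted from Proposition \ref{fixed-time}. Your evaluation of the third moment via $\bE[1/T_{a,y}]=a/y+1/y^2$ and $\bE[1/G_{a,y}]=a/y$ is in fact a cleaner route than the paper's direct triple integral, and your computations of $\bE[K'(1)\mid z]=z/4$ and $\bE[1/(K(1)-B(1))\mid z]=3/z$ agree with the paper.

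However, there is a concrete algebraic error that derails the final step. Writing $Y(u):=K(1+u)-B(1+u)$, the correct decomposition is
\[
2K(1+h)-B(1+h)=K(1+h)+\bigl(K(1+h)-B(1+h)\bigr)=k+ah+Y(h)=z+ah+\bigl(Y(h)-y\bigr),
\]
so the slope enters with coefficient $1$ (not $2$) and the bridge increment enters with a plus sign (not a minus sign). Your expression $z+2ah+y-Y(h)$ equals $k+2y+2ah-Y(h)$, which is not $2K(1+h)-B(1+h)$. Consequently the drift coefficient to be averaged is $a+\tfrac1y-\tfrac{y}{w}$, not $2a-\tfrac1y+\tfrac{y}{w}$, and your ``crux identity'' is false: plugging in the very moments you propose to compute gives
\[
\bE\Bigl[\,2K'(1)-\tfrac{1}{K(1)-B(1)}+\tfrac{K(1)-B(1)}{D_1-1}\,\Big|\,2K(1)-B(1)=z\Bigr]=\tfrac{2z}{4}-\tfrac{3}{z}+\Bigl(\tfrac1z+\tfrac{z}{4}\Bigr)=\tfrac{3z}{4}-\tfrac{2}{z}\neq\tfrac{2}{z},
\]
whereas the correct combination yields $\tfrac{z}{4}+\tfrac{3}{z}-\bigl(\tfrac1z+\tfrac{z}{4}\bigr)=\tfrac{2}{z}$ as required. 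So the proof is repaired simply by fixing the sign and the factor of $2$; the rest of your program (including the $O(h)$ treatment of the event $\{D_1<1+h\}$, which deserves a line of justification since $\bP[D_1<1+h\mid z]=O(h)$ only gives $o(h)$ after combining with the smallness of the increment of the continuous process $2K-B$) goes through and matches the paper.
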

\begin{proof}
By conditioning on the value of the $4$-tuple $(K'(1),K(1),K(1)-B(1),D_1-1)=(a,k,y,\omega)$ where $D_1$ is the first vertex of $K$ after time $1$. We need to find
\begin{align*}
\lim_{h \downarrow 0} \frac{1}{h} \int (\bE[\varphi(2K(1+h)-B(1+h))-\varphi(2K(1)-B(1)) \\
\Big| K'(1)=a,K(1)=k,K(1)-B(1)=y,D_1-1=\omega)\times \\ \bP(K'(1) \in da,K(1)-B(1) \in dy,D_1-1 \in d\omega | 2K(1)-B(1)=z)])
\end{align*}
with $k=z-y$.  By Theorem \ref{4-dim},  we have an explicit description of the semi-group of the process $\Psi$. In particular, its transition probabilities have a simpler form for short times (before encountering the next vertex of the concave majorant). More precisely, we have that the law of the process
\begin{equation}\label{law}
\left(K(1+u)-B(1+u),0 \le u \le \omega | (K'(1),K(1),K(1)-B(1),D_1-1)=(a,k,y,\omega)\right)
\end{equation}
 is that of a three-dimensional Bessel bridge from $(0,y)$ to $(\omega,0)$. We denote the process in \eqref{law} by $Z$. By the space-time transform that maps Bessel bridges to Bessel processes, the process $Z$ has the same distribution as
\[
(Z(u) , 0 \le u \le \omega) \ed (\sqrt{\omega} \left(1-\frac{u}{\omega}\right)R\left(\frac{u}{\omega-u}\right) , 0 \le u \le \omega)
\]
where $R$ is a three-dimensional Bessel process started from $\frac{y}{\sqrt{\omega}}$ at time zero. As $K$ is linear on the interval $[1,1+\omega]$, it follows that conditionally on 
\[
A:=\{K'(1)=a,K(1)=k,K(1)-B(1)=y,D_1-1=\omega\}
\]
we have the following equality in law
\[
(2K(1+u)-B(1+u) , 0 \le u \le \omega) \ed (k+au +\sqrt{\omega} \left(1-\frac{u}{\omega}\right)R\left(\frac{u}{\omega-u}\right) , 0 \le u \le \omega)
\]
Hence for $h$ small enough, we have
\begin{align*}
\bE[\varphi(2K(1+h)-B(1+h)) | A]= \\
\bE\left [\varphi \left(k+ah+\sqrt{\omega} \left(1-\frac{h}{\omega}\right)R\left(\frac{h}{\omega-h}\right)\right) | R(0)=\frac{y}{\sqrt{\omega}}\right]
\end{align*}
However, almost surely
\begin{align*}
\varphi \left(k+ah+\sqrt{\omega} \left(1-\frac{h}{\omega}\right)R\left(\frac{h}{\omega-h}\right)\right) -\varphi \left(k+\sqrt{\omega}R\left(\frac{h}{\omega-h}\right)\right)=\\
\left(a-\frac{1}{\sqrt{\omega}}R\left(\frac{h}{\omega-h}\right)\right)h\varphi'(\xi_h)
\end{align*}
for a random $\xi_h$ such that
\[
\left|\xi_h- \left(k+\sqrt{\omega}R\left(\frac{h}{\omega-h}\right)\right)\right| \le \left(a-\frac{1}{\sqrt{\omega}}R \left(\frac{h}{\omega-h} \right)\right)h
\]
Hence $\xi_h \rightarrow k+y=z$ almost surely as $h \to 0$. Hence we have that
\begin{equation}\label{1}
\begin{split}
\bE[\varphi(2K(1+h)-B(1+h)) | A]=\\
\bE\left[\varphi \left(k+\sqrt{\omega}R\left(\frac{h}{\omega-h}\right)\right)|R(0)=\frac{y}{\sqrt{\omega}}\right]+\left(a-\frac{y}{\omega}\right)\varphi'(z)h+o(h)
\end{split}
\end{equation}
The error is controlled by the fact that $\varphi$ has a compact support, and the stochastic continuity of the Bessel process $R$. Now, the Bessel process $R$ is known to verify the following SDE 
\[
dR(t)=d\beta(t)+\frac{dt}{R(t)}
\]
where $\beta$ is a Brownian motion. Hence, by It\^{o} formula for any $C^2$ function $\psi$, we have
\[
d\psi(R(t))=\psi'(R(t))d\beta(t)+\left(\frac{\psi'(R(t))}{R(t)}+\frac{1}{2}\psi''(R(t))\right)dt
\]
Hence
\[
\psi \left(R\left(\frac{h}{\omega-h}\right)\right)-\psi(R(0))=\int_{0}^{\frac{h}{\omega-h}} \psi'(R(s))d\beta(s) + \int_{0}^{\frac{h}{\omega-h}} \left(\frac{\psi'(R(s))}{R(s)}+\frac{1}{2}\psi''(R(s))\right)ds
\]
By taking the expectation, we get that
\[
\bE\left[\psi \left(R\left(\frac{h}{\omega-h}\right)\right)\vert R(0)=\frac{y}{\sqrt{\omega}}\right]=\psi \left(\frac{y}{\sqrt{\omega}}\right)+\frac{h}{\omega}\left(\frac{\psi'\left(\frac{y}{\sqrt{\omega}}\right)}{\frac{y}{\sqrt{\omega}}}+\frac{1}{2}\psi''\left(\frac{y}{\sqrt{\omega}}\right)\right)+O(h^2)
\]
Now, by taking $\psi(\cdot)=\varphi(k+\sqrt{\omega} \times \cdot)$, we get
\[
\bE\left[\varphi \left(k+\sqrt{\omega}R\left(\frac{h}{\omega-h}\right)\right)|R(0)=\frac{y}{\sqrt{\omega}}\right]=\varphi(z)+\frac{h}{\omega} \left(\omega\frac{\varphi'(z)}{y}+\frac{1}{2}\omega \varphi ''(z)\right)+O(h^2)
\]
which can be simplified to
\begin{equation}\label{2}
\bE\left[\varphi \left(k+\sqrt{\omega}R\left(\frac{h}{\omega-h}\right)\right)|R(0)=\frac{y}{\sqrt{\omega}}\right]=\varphi(z)+h\left(\frac{\varphi'(z)}{y}+\frac{1}{2} \varphi ''(z)\right)+O(h^2)
\end{equation}
Hence from \eqref{1} and \eqref{2}, we get
\[
\bE[\varphi(2K(1+h)-B(1+h)) | A]=\varphi(z)+h\left(\left(a+\frac{1}{y}-\frac{y}{\omega}\right)\varphi'(z)+\frac{1}{2}\varphi''(z)\right)+O(h^2)
\]
To finish the proof, we need thus to evaluate
\[
\bE \left[K'(1)+\frac{1}{K(1)-B(1)}-\frac{K(1)-B(1)}{D_1-1}\Big|2K_1-B_1=z \right]
\]
Now, from Proposition \ref{fixed-time}, we have that the joint density of $(K'(1),K(1)-K'(1),K(1)-B(1))$ at $(a,b,y)$ is given by
\[
f_{(K'(1),K(1)-K'(1),K(1)-B(1))}(a,b,y)=4y(a+b+y)\phi(a+b+y)\ind_{a,b,y>0}
\]
It follows then that the joint density of $(K'(1),K(1)-B(1),2K(1)-B(1))$ at $(a,y,z)$ is given by
\[
f_{(K'(1),K(1)-B(1),2K(1)-B(1))}(a,b,z)=4yz\phi(z) \ind_{a+y<z, a,y>0}
\]
By integrating out separately both $y$ and $a$, we get that
\[
f_{(K'(1),2K(1)-B(1))}(a,z)=2(z-a)^2z\phi(z) \ind_{0<a<z}
\]
and
\[
f_{(K(1)-B(1),2K(1)-B(1))}(y,z)=4y(z-y)z\phi(z) \ind_{0<y<z}
\]
However, it was shown that the density of $2K(1)-B(1)$ is that of $\chi_5$, i.e $f_{2K(1)-B(1)}(z)=\frac{2}{3}z^4\phi(z)$. Hence,
\[ 
f_{K'(1) | 2K(1)-B(1)=z}(a)=3\frac{(z-a)^2}{z^3}\ind_{0<a<z}\]
and
\[
f_{K(1)-B(1)|2K(1)-B(1)=z}(y)=\frac{6y(z-y)}{z^3} \ind_{0<y<z}
\]
Thus
\[
\bE[K'(1) | 2K(1)-B(1)=z]=\frac{z}{4} \text{ and } \bE\left[\frac{1}{K(1)-B(1)} | 2K(1)-B(1)=z\right]=\frac{3}{z}
\]
However, from Proposition \ref{fixed-time}, we have that the density of $D_1-1$ at $\omega$ given that $(K'(1),K(1)-B(1),2K(1)-B(1))=(a,y,z)$ is 
\[
\frac{a(y+(z-a)\omega)}{\sqrt{2\pi \omega^3} z} \exp \left(-\frac{(y-a\omega)^2}{2\omega} \right)
\]
and as the joint density of $(K'(1),K(1)-B(1))$ at $(a,y)$ given that $2K(1)-B(1)=z$ is equal to
\[
f_{(K'(1),K(1)-B(1))|2K(1)-B(1)=z}(a,y)=\frac{6y}{z^3}\ind_{a+y<z,a,y>0}
\]
then the density of $(K'(1),K(1)-B(1),D_1-1)$ at $(a,y,\omega)$ given that $2K(1)-B(1)=z$ is given by
\[
f_{(K'(1),K(1)-B(1),D_1-1)|2K(1)-B(1)=z}(a,y,\omega)=\frac{6ay(y+(z-a)\omega)}{\sqrt{2\pi \omega^3}z^4}  \exp \left(-\frac{(y-a\omega)^2}{2\omega} \right)
\]
To finish the proof, we just need to evaluate the following integral
\[
\int \int \int \frac{6ay^2(y+(z-a)\omega)}{\sqrt{2\pi \omega^5}z^4}  \exp \left(-\frac{(y-a\omega)^2}{2\omega} \right)\ind_{y+a<z ,y,a,\omega>0} dy da d\omega
\]
By integrating out first with respect to $\omega$ we get the expression
\[
\int_{0}^{z} da \int_{0}^{z-a} \left(\frac{6a}{z^4}+\frac{6ay}{z^3}\right)dy
\]
which is easily seen to give the value
\[
\frac{1}{z}+\frac{z}{4}
\]
This ends the proof.
\end{proof}
\begin{remark}
It is well known that the generator of $\text{BES}^{0}(5)$ is given by the differential operator $\frac{2}{z} \frac{d}{dz}+\frac{1}{2} \frac{d^2}{dz^2}$. Hence, the only remaining part of the conjecture that is left to prove is to show that $2K-B$ is a Feller process (a Markov process with sufficiently regular semigroups). As was pointed out in the discussion prior to the statement of Proposition \ref{local-2k-b}, applying the theory of Markov functions on the process $\Psi$ and the projection map $\pi : (a,k,y,\omega) \mapsto k+y$ is challenging due to the complicated form of the semigroup of $\Psi$. The second part  of the semigroup (after the next vertex of the concave majorant) that involves the convex minorant of the three-dimensional Bessel process with drift, is particularly involved as is seen from the densities in Proposition \ref{drift-fixed}. In theory, one should be able to express the joint law of $(2K(s)-B(s),2K(t)-B(t))$ for any $s<t$ as a six-dimensional integral from the results of Theorem \ref{4-dim} and Proposition \ref{drift-fixed}, but this seems out of reach of our computational expertise. \\

 Another difficulty that arises in trying to prove the Markovian property is the filtering problem associated with the canonical filtration of the process $2K-B$. Indeed, given the $\sigma$-algebra generated by $\{2K(u)-B(u) : u \le t\}$, it is not clear at all how to extract any information on either the path of $B$ or $K$ on $[0,t]$, this is mainly due to the fact that $K$ depends on the whole path of $B$ and the algebraic structure of the linear combination $2K-B$. In comparison, the $2M-B$ statement can be proved by using the fact that the map
\begin{align*}
\Phi& : C_{\infty}([0,\infty)) \to C_{\infty}([0,\infty)) \\
f & \mapsto \Phi(f)(t)=2\max_{0 \le s \le t}f(s)-f(t)
\end{align*}
where $C_{\infty}([0,\infty))$ is the space of continuous functions on $[0,\infty)$ such that $\sup_{t \ge 0} f(t)=+\infty$, admits an inverse given by
\begin{align*}
\Phi^{-1}& : C_{\infty}([0,\infty)) \to C_{\infty}([0,\infty)) \\
f & \mapsto \Phi(f)(t)=2\inf_{s \ge t}f(s)-f(t)
\end{align*}
We refer the reader to \cite{revuzyor}[Theorem 3.5] for details about the proof. Unfortunately, such an inversion is not possible in the $2K-B$ setting, and so it is not clear what part of the information is lost when we go from the filtration $\{K(u),B(u) : u \le t\}$ to $\{2K(u)-B(u) : u \le t \}$. One approach to analyze $2K-B$ is to look for a filtration $(\cG_t)_{t \ge 0}$ in which this process is a semi-martingale with a manageable decomposition. One candidate would be the following
\[
\cG_t:=\cF_t \bigvee \{ K(u) : u \ge 0\} \text{ for } t \ge 0
\]
where $(\cF_t)_{t \ge 0}$ is the canonical Brownian filtration. In this filtration, $K$ is an adapted and continuous increasing process, and $K-B$ is a concatenation of Brownian excursions whose lengths are determined by $K$. Hence, both $K-B$ and $2K-B$ are semi-martingales in this filtration. However, it does not seem easy to settle wether or not $2K-B$ is Markov from this perspective, and one would perhaps need to consider a more subtly enlarged filtration instead of $(\cG_t)_{t \ge 0}$ above. \\
 
As a final remark, by well known properties of the usual time inversion map from Brownian and Bessel processes to
corresponding bridges, discussed in \cite{MR2798000}, 
our conjecture that $2K-B$ has the same distribution as a five dimensional Bessel process is equivalent to the following conjecture regarding the least concave  majorant $k$ of $b$,  a standard Brownian bridge
of length $1$ from $0$ to $0$: that $2k - b$ has the the same distribution as a five dimensional Bessel bridge of length $1$ from $0$ to $0$. 
\end{remark}
\bibliographystyle{abbrv}
\bibliography{bib-cm}
\end{document}